\newtheorem{theorem}{Theorem}[section]
\newtheorem{proposition}[theorem]{Proposition}
\newtheorem{corollary}[theorem]{Corollary}
\newtheorem{question}[theorem]{Question}
\newtheorem{observation}[theorem]{Observation}
\newtheorem*{lemma*}{Lemma}
\newtheorem{claim}[theorem]{Claim}
\newtheorem*{claim*}{Claim}
\newtheorem{problem}[theorem]{Problem}
\newtheorem{construction}[theorem]{Construction}
\theoremstyle{remark}
\newtheorem*{remark}{Remark}
\newcommand{\bb}[1]{\mathbb{#1}}
\newcommand{\ca}[1]{\mathcal{#1}}
\newcommand{\bc}{,\allowbreak}
\newcommand{\ff}[2]{\left\lfloor\frac{#1}{#2}\right\rfloor}
\newcommand{\mr}[1]{\mathrm{#1}}
\newcommand{\cf}[2]{\left\lceil\frac{#1}{#2}\right\rceil}
\DeclareMathOperator{\sat}{sat}
\DeclareMathOperator{\wsat}{wsat}
\DeclareMathOperator{\prsat}{sat^\ast}
\DeclareMathOperator{\ssat}{ssat}
\DeclareMathOperator{\ex}{ex}
\DeclareMathOperator{\rex}{ex^\ast}
\title{Improved bounds for proper rainbow saturation}
\author{Andrew Lane\footnotemark[1]\thanks{Department of Mathematics and Statistics, University of Victoria, Canada. \\ Email: \texttt{\{andrewlane,nmorrison\}@uvic.ca}. } \thanks{Research supported by the Jamie Cassels Undergraduate Research Awards and Science Undergraduate Research Awards, University of Victoria.} 
    \and Natasha Morrison\footnotemark[1] \thanks{Research supported by NSERC Discovery Grant RGPIN-2021-02511 and NSERC Early Career Supplement DGECR-2021-00047.}
	}
\date{\today}
\begin{document}

\maketitle

\begin{abstract}
Given a graph $H$, we say that a graph $G$ is \emph{properly rainbow $H$-saturated} if: (1) There is a proper edge colouring of $G$ containing no rainbow copy of $H$; (2) For every $e \notin E(G)$, every proper edge colouring of $G+e$ contains a rainbow copy of $H$. The \emph{proper rainbow saturation number} $\prsat(n,H)$ is the minimum number of edges in a properly rainbow $H$-saturated graph. In this paper we use connections to the classical saturation and semi-saturation numbers to provide new upper bounds on $\prsat(n,H)$ for general cliques, cycles, and complete bipartite graphs. We also provide some general lower bounds on $\prsat(n,H)$ and explore several other interesting directions. 
\end{abstract}

\section{Introduction}

An \emph{edge-colouring} of a graph $G$ is a function $\phi:E(G)\to C$, where $C$ is a set of \emph{colours}. It is \emph{proper} if incident edges receive different colours under $\phi$ and \emph{rainbow} if it is injective. 
For graphs $G$ and $H$, and an edge-colouring $\phi$ of $G$, a \emph{rainbow copy of $H$} in $G$ under $\phi$ is a subgraph $H'$ of $G$ isomorphic to $H$ such that the restriction of $\phi$ to $E(H')$ is rainbow.

Say that a graph $G$ is \emph{properly rainbow $H$-saturated} if the following two properties hold:
\begin{enumerate}
    \item There is a proper edge colouring of $G$ containing no rainbow copy of $H$;
    \item For every $e \notin E(G)$, every proper edge colouring of $G+e$ contains a rainbow copy of $H$.
\end{enumerate}
Define the \emph{proper rainbow saturation number}, denoted $\prsat(n,H)$, to be the minimum number of edges in a properly rainbow $H$-saturated graph. The study of $\prsat(n,H)$ was instigated by Bushaw, Johnston, and Rombach~\cite{Bushaw2022} inspired by previous work on the rainbow extremal number in~\cite{keevash2007rainbow} in 2007. We remark that in~\cite{Bushaw2022}, $\prsat(n,H)$ is refered to as the rainbow saturation number, but we call it proper rainbow saturation to distinguish this from the distinct usage of the term `rainbow saturation' in the literature (see, e.g., \cite{barrus2017colored}, \cite{behague2024rainbow}, \cite{girao2020rainbow}). 

In this paper, we provide many new bounds on $\prsat(n,H)$, for $H$ in various families of graphs. In particular, we reveal and utilise relationships between proper rainbow saturation and several well studied parameters such as the classical saturation number, the weak-saturation number, and the semi-saturation number of graphs, introduced below.

The saturation number, introduced by Erd\"os, Hajnal, and Moon~\cite{ErdosHajnalMoon1964} in 1964 and denoted $\sat(n,H)$, is the minimum number of edges in an $n$-vertex $H$-saturated graph $G$. That is, $G$ contains no copy of $H$, but the addition of any edge to $G$ creates a copy of $H$. The weak-saturation number, introduced by Bollob\'{a}s~\cite{Bela} in 1968 and denoted $\wsat(n,H)$, is the minimum number of edges in an $n$-vertex weakly-$H$-saturated graph $G$. That is, $G$ contains no copy of $H$ and there is an ordering $e_1,\ldots, e_m$ of the edges not present in $G$ such that for each $i$, the graph $G \cup \{e_1,\ldots,e_i\}$ contains a copy of $H$ using the edge $e_i$. These parameters have been very well studied; see the comprehensive survey of Currie, J. Faudree, R. Faudree, and Schmitt~\cite{faudree2011survey}. Note that 
\begin{equation}\label{eq:simpleinq}
    \prsat(n,H) \ge \wsat(n,H).
\end{equation}

For a graph $H$, let $\mathcal{F}^\ast(H)$ be the family of graphs that contain a rainbow copy of $H$ in all proper colourings. For a family of graphs $\mathcal{F}$, let $\sat(n,\mathcal{F})$ denote the minimum number of edges in an $n$-vertex graph $G$ containing no $F \in \mathcal{F}$ and such that adding any edge to $G$ creates some $F \in \mathcal{F}$. It follows from the definitions that 
\begin{equation}\label{obs:relation}
    \prsat(n,H) = \sat(n, \mathcal{F}^*(H)).
\end{equation}
K\'aszonyi and Tuza \cite{Kaszonyi1986} proved $\sat(n,\mathcal{F})$ is linear for all families of graphs $\mathcal{F}$. Combining this with \eqref{obs:relation}, one can deduce immediately that $\prsat(n, H) = O(n)$ for all $H$ \cite{pcbjr, pcbj}. Three of our main results, Theorems~\ref{thm:Kk_RF_bounds}, \ref{thm:Ck_upper_satast}, and \ref{thm:Kkl_upper}, are obtained by utilising \eqref{obs:relation} and analysing $\sat(n, \mathcal{F}^*(H))$ to obtain new upper bounds on $\prsat(n,H)$ for several interesting classes of graphs. 

When studying extremal parameters, one of the most natural families of graphs to consider is complete graphs. Indeed, Erd\"os, Hajnal, and Moon~\cite{ErdosHajnalMoon1964} instigated the study of saturation numbers by proving that $\sat(n,K_k) = (k-2)(n-k+2)+\binom{k-2}{2}$ whenever $2\le k \le n$. Interestingly, $\sat(n,K_k) = \wsat(n,K_k)$ and there are many beautiful proofs of this. See, for example, \cite{A,F,K84,K85,L}.

 It is not difficult to see that, as a triangle is always rainbow in a proper edge colouring, $\prsat(n,K_3) = n-1$. Bushaw, Johnston and Rombach~\cite{Bushaw2022} proved that $\prsat(n,K_k)\le O(k^4n)$ for $n$ sufficiently large. Our first application of \eqref{obs:relation} improves this to $\prsat(n,K_k) \le O(k^3n)$, for all $k \ge 4$. Interestingly, our bounds rely on results regarding the rainbow Ramsey number, discussed in Section~\ref{sub:Kk} below.

\begin{restatable}{theorem}{KkRFbounds} \label{thm:Kk_RF_bounds}
For all $k \ge 4$ and $n \ge 6$:
\begin{enumerate}
    \item[(i)] $\prsat(n,K_k) \le \left(\frac14+O\left(\frac{1}{\ln{k}}\right)\right)k^3 n.$
    \item[(ii)] $\prsat(n,K_5) \le \left\lfloor\frac{21n}{2}\right\rfloor-48.$
\end{enumerate}
\end{restatable}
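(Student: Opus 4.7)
The plan is to use the identity $\prsat(n,K_k) = \sat(n,\mathcal{F}^*(K_k))$ from~\eqref{obs:relation} together with known bounds on the rainbow Ramsey number. Let $R=R(k)$ denote the minimum integer such that every proper edge-colouring of $K_R$ contains a rainbow $K_k$; then $K_R \in \mathcal{F}^*(K_k)$ and, as discussed in Section~\ref{sub:Kk}, results on rainbow Ramsey numbers give an upper bound of the form $R \le \bigl(\tfrac14 + O(\tfrac1{\ln k})\bigr)k^3 + O(1)$. The key observation is that any $K_R$-saturated graph $G$ on $n$ vertices that additionally admits a proper edge-colouring with no rainbow $K_k$ is automatically properly rainbow $K_k$-saturated, because for every non-edge $e$ the graph $G+e$ contains a $K_R$ and hence a rainbow $K_k$ in every proper colouring.

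For part (i), I would take $G$ to be the classical Erd\H{o}s--Hajnal--Moon graph $K_{R-2} + \overline{K_{n-R+2}}$, which is $K_R$-saturated and has exactly $\sat(n,K_R) = (R-2)n - \binom{R-1}{2}$ edges, matching the desired bound after substituting the rainbow Ramsey estimate. The nontrivial step is the colour extension: I would start with a fixed proper edge-colouring $\phi$ of $K_{R-1}$ avoiding rainbow $K_k$ (which exists by minimality of $R$), restrict it to the clique $K_{R-2}$ of $G$, and then colour the bipartite edges between $V(K_{R-2})$ and the independent set $\{u_1,\ldots,u_{n-R+2}\}$ so as to preserve properness and to block the creation of new rainbow $K_k$'s. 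Since the $u_j$ form an independent set, every $K_k$ in $G$ not contained in $K_{R-2}$ consists of a $(k-1)$-clique inside $K_{R-2}$ together with a single vertex $u_j$; hence it suffices to design the extension so that for each such pair a colour collision occurs either among the $k-1$ edges from $u_j$ into the clique or between one of those edges and an edge of the $(k-1)$-clique. A natural way to achieve this is to let each $u_j$ ``mimic'' the vertex of $K_{R-1}$ that was dropped when passing to $K_{R-2}$, shifting palettes across the different $u_j$'s so that properness is preserved.

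For part (ii), the improved bound for $K_5$ follows the same framework but with a hand-optimised small core and a tailored explicit proper colouring. Since $R(5)$ is modest, one can afford to replace the pure $\sat(n,K_{R(5)})$ graph by a slightly sparser structure that still serves as a rainbow Ramsey witness upon any edge addition, and the savings in the lower-order terms produce the precise constant $21/2$.

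I expect the main obstacle to be the colour-extension step in part (i): ensuring that every $(k-1)$-clique-plus-independent-vertex configuration contains a repeated colour, while maintaining a proper colouring across all the $u_j$'s simultaneously, requires synchronising palettes so that the same colour collisions persist consistently across the whole construction. It is this step that ultimately controls the constant $\tfrac14$ in the leading term.
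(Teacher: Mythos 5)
Your high-level framing is correct and matches the paper: both exploit $\prsat(n,K_k)=\sat(n,\mathcal{F}^*(K_k))$ and rainbow Ramsey numbers. However, the specific construction you propose fails, and the obstacle you flag (``synchronising palettes'') is not merely a technical nuisance to be overcome by cleverness --- it is an impossibility. Your candidate graph $G = K_{R-2}+\overline{K_{n-R+2}}$ with $R = R^*(P_3,K_k)$ has no proper edge-colouring free of a rainbow $K_k$ once $n$ is large. The reason is exactly the argument the paper uses in its first claim: the graph $K_r+\overline{K_\ell}$ with $r=R^*(P_3,K_{k-1})$ and $\ell>\binom{k-1}{2}(k-3)$ lies in $\mathcal{F}^*(K_k)$, because any proper colouring of the $K_r$ part already contains a rainbow $K_{k-1}$, and only $\binom{k-1}{2}(k-3)$ vertices of the independent set can be ``blocked'' from extending it to a rainbow $K_k$. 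Since $R^*(P_3,K_k)-2 \ge R^*(P_3,K_{k-1})$ once $k$ is not tiny (the quantity grows like $\Theta(k^3/\ln k)$), your graph contains such a subgraph and is itself in $\mathcal{F}^*(K_k)$. It therefore violates condition (1) of proper rainbow saturation, so no colour-extension scheme can rescue the argument.

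The paper avoids this by taking a \emph{much smaller} clique part, of size $u = R^*(P_3,K_{k-1})-1$, and then does not take $H=\emptyset$ on the remaining vertices. Instead it appeals to K\'aszonyi--Tuza's Theorem~\ref{thm:weaker_sat_general_upper_bound_gen}: greedily add edges to $K_u+\overline{K_{n-u}}$ while the graph stays $\mathcal{F}^*(K_k)$-free, ending at a saturated graph $K_u+H$ with $\Delta(H)<d$, where $d \le \binom{k-1}{2}(k-3)-r+k$. The dominant $\frac14 k^3 n$ term comes from $\frac{d}{2}n \approx \frac12\binom{k-1}{2}(k-3)\,n$, \emph{not} from $R$ as your write-up suggests; the Alon--Jiang--Miller--Pritikin bound $R^*(P_3,K_k)=\Theta(k^3/\ln k)$ only controls the $u\cdot n$ term, which is lower order. (Your stated bound $R\le(\tfrac14+O(1/\ln k))k^3+O(1)$ is technically true since $\Theta(k^3/\ln k)=o(k^3)$, but the $\tfrac14$ plays no role there, and this is a sign the bookkeeping has gone astray.) For (ii) the proposal is too vague to assess: the paper's argument is the concrete computation $R^*(P_3,K_4)=7$, established via the explicit $7$-vertex graph $H\cong K_3+(K_3\cup K_1)$ from Construction~\ref{cns:K4}, which is then substituted into the K\'aszonyi--Tuza formula with $u=6$ and $d\le 9$.
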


We also provide an improved upper bound to that given by Theorem~\ref{thm:Kk_RF_bounds} when $k = 4$.  

\begin{restatable}{theorem}{fourcliqueconstruction} \label{thm:K4construction} For all $n \ge 7$, $\prsat(n,K_4) \le \frac{7}{2}n + O(1).$
\end{restatable}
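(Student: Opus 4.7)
The plan is to give an explicit construction of a properly rainbow $K_4$-saturated graph $G_n$ on $n$ vertices with $|E(G_n)| \le \frac{7}{2}n + O(1)$. The construction is built around a constant-sized core $C$ (for instance a copy of $K_5$, whose natural cyclic $5$-edge-colouring has no rainbow $K_4$), together with $\lfloor (n - |C|)/2 \rfloor$ ``pair gadgets''. Each pair gadget consists of two new vertices $x_i, y_i$ joined by an edge, with each of $x_i, y_i$ connected to three carefully chosen (partially overlapping) vertices of the core; these six pair-to-core edges together with the internal pair edge give $7$ edges per gadget. Summing over gadgets and adding the $O(1)$ core edges yields $|E(G_n)| = \frac{7}{2}n + O(1)$, matching the target bound.

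There are then two things to verify. First, I would exhibit a single proper edge colouring of $G_n$ containing no rainbow $K_4$: colour the core with few colours so that no $K_4$ inside $C$ is rainbow, and then colour the six pair-to-core edges of each gadget so that every remaining $K_4$ of $G_n$ (these fall into a short list of types, e.g.\ $\{x_i, y_i, c, c'\}$ or $\{x_i, c, c', c''\}$ for core vertices $c, c', c''$) contains two non-adjacent edges sharing a colour. Second, for every non-edge $e = uv \notin E(G_n)$ and every proper edge colouring of $G_n + e$, I would exhibit a rainbow $K_4$. This is handled by cases on the type of $e$ (pair-pair across different pairs, pair-core where the core vertex is unattached, possibly core-core depending on $C$). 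For each type, $e$ lies in several candidate $K_4$'s supported on $C$ together with the pair gadgets, and one shows that the constraints required to make every such candidate non-rainbow are mutually unsatisfiable for any proper colouring.

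The main obstacle is this last step: forcing a rainbow $K_4$ in every proper colouring of $G_n + e$. The adversary controls the colouring and has significant local freedom, especially at the high-degree core vertices, and the number of candidate $K_4$'s per non-edge is only a small constant. The core must therefore be chosen with its colouring rigid enough, and the pair-attachment patterns varied enough across different pairs, that the colour equalities required to kill every candidate rainbow $K_4$ at a given non-edge contradict one another; this is exactly what drives the $7$-edges-per-gadget count, as fewer edges per pair would leave the adversary too much flexibility (mirroring the failure of the classical book graph). Once this verification is complete, the edge count of $\frac{7}{2}n + O(1)$ is immediate.
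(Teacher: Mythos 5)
Your proposal is a plan, not a proof: the two essential verifications --- exhibiting a proper colouring of $G_n$ with no rainbow $K_4$, and showing that for \emph{every} non-edge $e$ \emph{every} proper colouring of $G_n+e$ has a rainbow $K_4$ --- are exactly the content of the theorem, and you explicitly defer both. Worse, the construction you sketch appears to be structurally unworkable. With a constant-sized complete core $C$ and $\Theta(n)$ pair gadgets each attached to a triple of core vertices, pigeonhole forces $\Theta(n)$ gadget vertices to share a single attachment triple $\{a,b,c\}$. Each such $x_i$ adjacent to all of $a,b,c$ creates the $K_4$ on $\{x_i,a,b,c\}$, and to make this non-rainbow you must enforce one of $\phi(x_ia)=\phi(bc)$, $\phi(x_ib)=\phi(ac)$, $\phi(x_ic)=\phi(ab)$. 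But by properness at $a$, at most one $x_i$ can use the first option, and similarly at $b$ and $c$, so at most three gadget vertices per triple can be saved. Thus a constant core cannot support linearly many gadgets while preserving a rainbow-$K_4$-free proper colouring; your ``main obstacle'' paragraph gestures at exactly this tension but does not resolve it, and I believe it cannot be resolved without changing the construction.

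The paper's construction is genuinely different and sidesteps this problem. It takes $G \cong K_2 + ((k-1)K_4 \cup K_m)$ with $k = \lceil (n-2)/4\rceil$: two universal vertices $u,v$ together with pairwise disjoint blocks $A_1,\dots,A_k$, each block inducing a $K_6$ with $\{u,v\}$. Because the blocks are disjoint, one can properly $5$-colour each $K_6$ using a \emph{fresh} palette $C_i \cup \{c\}$ (with $c$ the colour of $uv$ shared across blocks); then every $K_4$ of $G$ lies inside some $K_6$ and sees at most $5 < 6$ colours, so it is automatically non-rainbow --- no per-$K_4$ colour coincidence needs to be arranged. This is what your small-shared-core design lacks: in the paper, the only high-degree vertices are $u$ and $v$, and the colour disjointness across blocks makes properness at $u,v$ trivial. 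Saturation is then proved by showing that adding a cross-block edge $xy$ creates a copy of $H \cong K_3 + (K_3\cup K_1)$ (a $K_6$ plus a vertex joined to three of its vertices) and that $H$ forces a rainbow $K_4$ in every proper colouring, via a short case split on whether the $K_6$ receives exactly $5$ colours or more. You would need to both replace your core-and-gadget structure and supply this kind of forcing argument to complete the theorem.
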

The results on $\wsat(n,K_k)$ discussed above give that for any fixed $k$, we have $\prsat(n,K_k) \ge (k-2)n + O(1).$ There is a gap between the upper and lower bounds, but there is no reason to believe a lower bound obtained from $\wsat(n,K_k)$ should be close to the truth.

Various results have been proved for the rainbow saturation number of cycles. Improving on work of Bushaw, Johnston, and Rombach~\cite{Bushaw2022}, Halfpap, Lidický, and Masařík~\cite{halfpap2024proper} proved that $\prsat(n,C_4)=\frac{11}{6}n+o(n)$ and found upper bounds on $\prsat(n,C_5)$ and $\prsat(n,C_6)$. Here we provide the first general upper bound for cycles.

\begin{restatable}{theorem}{cycles} \label{thm:Ck_upper_satast}
For all $n\ge k \ge 7$,
\begin{enumerate}
\item[(i)] For odd $k \ge 7$, $\prsat(n,C_k) \le \frac{k-1}{2}n+\frac{1-k^2}{8}$.
\item[(ii)] For even $k \ge 10$, $\prsat(n,C_k) \le \lfloor \frac{k-1}{2}n+\frac{4-k^2}{8}\rfloor$.
\item[(iii)] $\prsat(n,C_8) \le 5n-12$.
\end{enumerate}
\end{restatable}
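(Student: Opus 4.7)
The strategy is, for each of parts (i)--(iii), to exhibit an explicit $n$-vertex graph $G$ with the claimed number of edges and show that $G$ is properly rainbow $C_k$-saturated; by the identity $\prsat(n,C_k)=\sat(n,\mathcal{F}^\ast(C_k))$ in \eqref{obs:relation} this yields the stated upper bound on $\prsat(n,C_k)$.

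Our construction $G$ will have $\Delta(G)\le k-1$, which dispatches property (1) at essentially no cost: by Vizing's theorem (or by an explicit cyclic-shift colouring adapted to the structure of $G$) we obtain a proper edge colouring of $G$ using only $k-1$ colours, and such a colouring cannot contain a rainbow $C_k$, since a rainbow $C_k$ needs $k$ distinct colours. The leading term $\tfrac{k-1}{2}n$ forces $G$ to be almost $(k-1)$-regular, and the subtractive constant $\tfrac{1-k^2}{8}=-\tfrac{(k-1)(k+1)}{8}$ matches precisely the edge deficit of the $\tfrac{k-1}{2}$-th power of a path $P_n^{(k-1)/2}$ from a $(k-1)$-regular graph; this path-power, or a closely related variant that also controls non-edges between far-apart vertices, is therefore the natural base for the construction in (i). For (ii), a parity-adjusted analogue works for even $k\ge 10$, while in (iii) this parity adjustment degenerates at $k=8$ and a separate ad hoc construction, with the slightly worse bound $5n-12$, is required.

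The main obstacle is property (2): for every non-edge $e=xy$ of $G$, every proper edge colouring of $G+e$ must contain a rainbow $C_k$. The plan is to exhibit inside $G$ a sufficiently rich family of internally vertex-disjoint $(k-1)$-paths from $x$ to $y$, each forming a $C_k$ together with $e$, and then argue that in any proper edge colouring of $G+e$ at least one of these $C_k$'s is rainbow. The delicate points are: (a) guaranteeing enough such $(k-1)$-paths for every non-edge, including those between far-apart vertices, which constrains the construction; and (b) ruling out adversarial proper colourings that could simultaneously destroy the rainbow property of every candidate $C_k$. Step (b) proceeds via careful analysis of the colour constraints imposed at the endpoints $x,y$ of $e$ and at the internal vertices of the paths; the parity of $k$ governs how many of these constraints can be satisfied at once, which is why $k=8$ falls outside the even-$k$ construction and requires separate treatment.
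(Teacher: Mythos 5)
The approach in your proposal is fundamentally different from the paper's, and unfortunately I think it cannot work as sketched. You observe that the leading term $\tfrac{k-1}{2}n$ matches the edge count of a near-$(k-1)$-regular graph and propose to build $G$ as (a variant of) the path power $P_n^{(k-1)/2}$. The degree bound $\Delta(G)\le k-1$ is attractive because a proper $(k-1)$-edge-colouring has too few colours for a rainbow $C_k$, which handles condition~(1). But condition~(2) fails for exactly the reason you flag and then hope to fix: take a non-edge $e=xy$ with $\mathrm{dist}_G(x,y)>k-1$ (which is most pairs, since $G$ has bounded degree). Adding $e$ creates no new copy of $C_k$. Moreover, a proper $(k-1)$-colouring of $G$ extends to a proper $\le k$-colouring of $G+e$ in which $e$ is the unique edge of the $k$-th colour; since $e$ lies on no $C_k$ in $G+e$, every $C_k$ sees at most $k-1$ colours and is not rainbow. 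Hence $G+e\notin\mathcal{F}^\ast(C_k)$ and $G$ is not properly rainbow $C_k$-saturated. This is not a fixable detail of the path power but an obstruction for any construction with bounded maximum degree, so the plan in step (b) cannot be carried out.

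The paper instead works entirely inside the $\sat(n,\mathcal{F}^\ast(C_k))$ framework of K\'aszonyi and Tuza. It computes $u(\mathcal{F}^\ast(C_k))=\lceil k/2\rceil-1$ and bounds $d(\mathcal{F}^\ast(C_k))$ by $1$, $2$, or $5$ according to whether $k$ is odd, even $\ge 10$, or $8$, and then applies Theorem~\ref{thm:weaker_sat_general_upper_bound_gen}. The resulting extremal graph is of the form $K_{\lceil k/2\rceil-1}+H$ with $\Delta(H)$ tiny, i.e., it has $\lceil k/2\rceil-1$ \emph{universal} vertices. Thus the edges are concentrated on a few vertices of degree $\approx n$, not spread out to make a nearly regular graph; the universal vertices are what provide, for every non-edge $e$, an unavoidable rainbow $C_k$ through $e$. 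The leading coefficient $\tfrac{k-1}{2}$ arises as $u+\tfrac{d-1}{2}$, not as half the maximum degree, which is why your numerology coincides but your structural picture does not. To repair the proof you would need to discard the path-power idea and either reproduce the K\'aszonyi--Tuza computation (the hard part is showing the explicit graph $G=(K_\ell + \overline{K_{|Y|}})$ minus most edges at one clique vertex lies in $\mathcal{F}^\ast(C_k)$), or exhibit some other construction with $\Theta(k)$ near-universal vertices.
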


Another very natural family of graphs to consider is complete bipartite graphs. The saturation number for complete multipartite graphs has been determined asymptotically by Bohman, Fonoberova, and Pikhurko~\cite{BFO}, precise results for $K_{k,\ell}$ with $k,\ell \le 3$ are known~\cite{YC,GS,HLSZ}. Regarding weak saturation, the precise value of $\wsat(n,K_{k,k})$ is known for large $n$; see~\cite{K85,KMM}. The latter paper also provides asymptotically tight bounds for $\wsat(n,K_{k,\ell})$. Here we provide a general upper bound for $\prsat(n,K_{k,\ell})$.

\begin{restatable}{theorem}{completebipartite} \label{thm:Kkl_upper}
Let $\ell \ge k \ge 2$.
Then for all $n \ge k+\ell$,
\[\prsat(n,K_{k,\ell})\le \left(k-1+\frac{(k(k-1)+1)(\ell-1)}{2}\right)n-\frac{(k(k-1)^2+1)(\ell-1)+k(k-1)}{2}.\]
\end{restatable}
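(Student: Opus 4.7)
The plan is to apply the identity $\prsat(n, K_{k,\ell}) = \sat(n, \mathcal{F}^*(K_{k,\ell}))$ from~\eqref{obs:relation} and exhibit an explicit $n$-vertex graph $G$ with the claimed number of edges that is saturated with respect to $\mathcal{F}^*(K_{k,\ell})$. The natural starting point is to pick a set $A \subseteq V(G)$ of size $k-1$ inducing a clique and fully joined to $B := V(G) \setminus A$, and within $B$ to overlay a nearly-regular structure so that each vertex of $B$ acquires additional degree roughly $(\ell-1)(k(k-1)+1)$. A concrete realisation is to take $\ell-1$ edge-disjoint layers on $B$, each a vertex-disjoint union of cliques of size $k(k-1)+2$, with small adjustments near the boundary to handle divisibility and to absorb the exact additive constant. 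A direct edge count then matches the claimed bound.

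To show $G$ admits a proper edge colouring with no rainbow $K_{k,\ell}$: since $|A| = k-1$, any $K_{k,\ell} \subseteq G$ must have at least one vertex of its $k$-part in $B$. I would define a proper colouring $\phi$ of $G$ in which the colouring of each $B$-clique is coordinated with the colouring of its incident $A$-$B$ edges, using a small palette on each clique, so that every $K_{k,\ell}$ subgraph of $G$ must contain two of its $k\ell$ edges sharing a colour by pigeonhole on the shared colour classes. For rainbow saturation, let $uv$ be a non-edge (necessarily with $u,v \in B$), and let $\phi$ be any proper edge colouring of $G + uv$. I would search for a rainbow $K_{k,\ell}$ of the form $X = A \cup \{u\}$, $Y = \{v\} \cup S$ where $S$ is an $(\ell-1)$-subset of $N_B(u)$; the construction provides $\binom{(\ell-1)(k(k-1)+1)}{\ell-1}$ candidate sets $S$, and an averaging argument over colour conflicts should select some $S$ producing a rainbow $K_{k,\ell}$.

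The main difficulty is this rainbow saturation step, since $\phi$ is adversarial and colour coincidences among non-incident edges of the candidate $K_{k,\ell}$'s can obstruct rainbowness in many ways. The argument must bound the number of ``bad'' sets $S$ (those whose associated $K_{k,\ell}$ contains a pair of edges with equal colour) and show that this count is strictly less than the total number of candidates. I expect the precise additive contribution in the theorem's constant — which is slightly larger than what a purely regular layered construction would give — to arise from fine adjustments within $B$ needed to push this counting argument through uniformly across all non-edges $uv$ and all proper colourings $\phi$ of $G + uv$.
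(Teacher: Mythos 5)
Your proposal diverges from the paper's method in a fundamental way: you attempt to construct an explicit properly rainbow $K_{k,\ell}$-saturated graph $G$ and verify saturation directly, whereas the paper never constructs a saturated graph at all. The paper computes the K\'aszonyi--Tuza parameters $u(\ca{F}^*(K_{k,\ell}))$ and $d(\ca{F}^*(K_{k,\ell}))$ and plugs them into the general bound $\sat(n,\ca{F}) \le un + \lfloor (d-1)(n-u)/2\rfloor - \binom{u+1}{2}$, which guarantees the existence of a saturated graph of the right size via a greedy extension of $K_u + \overline{K_{n-u}}$ without ever describing it. To compute $u = k-1$, the paper shows the complete bipartite graph $K_{k,\,k(k-1)(\ell-1)+\ell}$ lies in $\ca{F}^*(K_{k,\ell})$; the greedy ``find a vertex that avoids all used colours'' argument there is similar in spirit to your proposed averaging over candidate sets $S$, but it is applied to a small fixed witness graph rather than to $G+uv$, and it only needs to succeed for this one graph, not uniformly over an entire family of extensions.

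That said, as written your proposal has genuine gaps, and you acknowledge them. Neither of the two load-bearing verifications is carried out: you do not exhibit, let alone analyze, the proper colouring of $G$ that avoids a rainbow $K_{k,\ell}$, and you leave the rainbow-forcing step for $G+uv$ as an ``averaging argument \dots should select some $S$'' without any bound on the number of bad sets $S$ or on the colour classes that could ruin them. There is also a real risk your construction simply is not saturated: an edge-disjoint union of clique layers on $B$ together with the join to $A$ is a specific graph, and it would have to be checked (for every $k,\ell,n$) both that some proper colouring avoids a rainbow $K_{k,\ell}$ and that \emph{every} proper colouring of $G+uv$ contains one, for \emph{every} non-edge $uv$, including non-edges that might have $u \in A$ depending on how the ``boundary adjustments'' are made. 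This is a much heavier burden than what the paper actually needs, and it is why the paper invokes K\'aszonyi--Tuza rather than building an explicit saturated graph. As it stands the proposal is a plausible research plan, not a proof.
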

In fact, Theorem~\ref{thm:Kkl_upper} is a consequence of a more general result regarding the proper rainbow saturation number of bipartite graphs; see Theorem~\ref{thm:bipartite_general_upper_bound} below. As with complete graphs, the best lower bounds on $\prsat(n,K_{k,\ell})$ that we are aware of can be obtained using \eqref{eq:simpleinq} and the results from \cite{K85,KMM} on $\wsat(n,K_{k,\ell})$. For $k < \ell$ fixed, and $n$ large, we obtain $\prsat(n,K_{k,\ell}) \ge (k-1)n + O(1).$

Once again, we see that the lower bound from the weak-saturation number is not very close to the upper bound obtained via the analysis of $\sat(n,\mathcal{F}^*(H)).$

We now turn our attention to general lower bounds. As discussed in~\cite{halfpap2024proper}, it is not difficult to show that $\prsat(n,C_k) \ge n$ for $k \ge 4$. It is straightforward to see that for any graph $H$ with $\delta(H) = d$, we have $\prsat(n,H) \ge \sat(n,H) \ge \frac{d-1}{2}n$. In general, it is difficult to obtain lower bounds on $\sat(n,H)$, $\wsat(n,H)$, and $\prsat(n,H)$. Via a connection to `semi-saturation', we are able to give linear lower bounds for two large classes of graphs that may have minimum degree 1. Say a graph $G$ is {\em $H$-semi-saturated} if for all $e \in E(\overline{G})$, $G+e$ contains a copy $H'$ of $H$ such that $e \in E(H')$ (but $G$ is not necessarily $H$-free). Let $\ssat(n,H)$ be the minimum number of edges in an $H$-semi-saturated graph on $n$ vertices.

From the definitions, we see that any properly rainbow $H$-saturated graph is $H$-semi-saturated. That is, $\prsat(n,H) \ge \ssat(n,H)$.  We can obtain a general linear lower bound on $\ssat(n,H)$ and hence $\prsat(n,H)$ for some particular classes of graphs.

\begin{restatable}{theorem}{ssatthm}\label{thm:ssat_general_lower_bound}  For any $n$:
    \begin{enumerate}
        \item Any graph $H$ with no isolated edge satisfies $\prsat(n,H) \ge \ssat(n,H) \ge \lfloor \frac{n}{2} \rfloor$.
        \item If $H$ is a connected cyclic graph, then $\prsat(n,H) \ge \ssat(n,H)\ge n-1$.
    \end{enumerate}
\end{restatable}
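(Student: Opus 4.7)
Both halves reduce to proving a lower bound on $\ssat(n,H)$, since the inequality $\prsat(n,H) \ge \ssat(n,H)$ is already recorded in the paragraph preceding the statement.

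For part~(1), the plan is to show that every $H$-semi-saturated graph $G$ has at most one isolated vertex. If $u$ and $v$ were both isolated in $G$, then $uv$ would be an isolated edge of $G + uv$; in any copy $H'$ of $H$ in $G + uv$ containing $uv$, the edge of $H$ mapped to $uv$ would therefore be an isolated edge of $H$, contradicting the hypothesis. Hence at least $n-1$ vertices of $G$ have positive degree, and a handshake count gives $|E(G)| \ge \lceil (n-1)/2 \rceil = \lfloor n/2 \rfloor$.

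For part~(2), the central claim is that at most one connected component of $G$ is acyclic (counting an isolated vertex as an acyclic component). Granted this claim, if the unique acyclic component has $n_1$ vertices then it is a tree contributing at least $n_1 - 1$ edges, while each remaining component is connected and contains a cycle and therefore contributes at least as many edges as vertices; summing gives $|E(G)| \ge (n_1 - 1) + (n - n_1) = n - 1$.

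To establish the claim, I will argue by contradiction. Suppose two components $C_i$ and $C_j$ of $G$ are both acyclic; pick $u \in C_i$ and $v \in C_j$, and let $H'$ be the copy of $H$ in $G + uv$ guaranteed by semi-saturation, with $uv \in E(H')$. Because $H$ is connected, $H'$ lies entirely within the unique component of $G + uv$ containing $u$ and $v$, namely $V(C_i) \cup V(C_j)$ together with the single new edge $uv$. Since $H$ is cyclic, $H'$ contains a cycle $D$. The key observation is that $D$ cannot use $uv$: otherwise $D - uv$ would be a $u$--$v$ path contained in $G$, contradicting the fact that $u$ and $v$ lie in different components of $G$. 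Therefore $D \subseteq G$, and being connected it lies entirely in $C_i$ or in $C_j$, contradicting the assumption that both components are acyclic. I expect this cycle-avoids-$uv$ observation to be the crux of the proof; all remaining work is bookkeeping.
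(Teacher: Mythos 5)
Your proposal is correct and follows essentially the same route as the paper's proof: in both parts you show that an $H$-semi-saturated graph can have at most one degenerate component (isolated vertex in part~(1), acyclic component in part~(2)) and then count edges. The only cosmetic difference is that where you argue a cycle in $H'$ cannot cross the bridge $uv$, the paper instead observes directly that joining two tree components by a single edge yields a tree, hence an $H$-free component; these are the same observation phrased differently.
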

This result is very straightforward, but we have not seen it elsewhere in the literature so we include it for completeness. We use Theorem~\ref{thm:ssat_general_lower_bound} to make progress on classifying those graphs with constant proper rainbow saturation number. The graphs with constant saturation number have been classified by K\'aszonyi and Tuza. 
\begin{theorem}[K\'aszonyi, Tuza \cite{Kaszonyi1986}] \label{thm:Kaszonyi_sat_constant}
Let $F$ be a graph.
Then $\sat(n,F) = c+ o(1)$ for some constant $c$ if and only if $F$ has an isolated edge.
\end{theorem}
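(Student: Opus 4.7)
The plan is to prove both directions of the equivalence separately. For the ``only if'' direction, suppose $F$ has no isolated edge and let $G$ be any $F$-saturated graph on $n$ vertices. I would show that $G$ has at most one isolated vertex, which then forces $|E(G)| \ge (n-1)/2$ and rules out $\sat(n,F) = c + o(1)$. Indeed, if $u \neq v$ are both isolated in $G$, then $G+uv$ must contain a copy of $F$ using the edge $uv$; since both $u$ and $v$ have degree exactly $1$ in $G+uv$, they correspond in this copy to endpoints $x, y$ of an edge of $F$ with $\deg_F(x) = \deg_F(y) = 1$, making $xy$ an isolated edge of $F$ and contradicting the hypothesis.

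For the ``if'' direction, suppose $F$ has an isolated edge $xy$, let $i$ denote the number of isolated vertices in $F$, and set $f = |V(F)|$. The construction I would take is the graph $G$ consisting of a clique on $f - i - 1$ vertices together with $n - f + i + 1$ further isolated vertices. This graph is $F$-free, because any embedding of $F$ into $G$ would need to send each of the $f - i$ non-isolated vertices of $F$ into the clique $K_{f-i-1}$ (each such vertex requires a neighbour in $G$, and all edges of $G$ lie inside the clique). To verify $F$-saturation, I would consider a missing edge $e$ in two cases. If $e$ lies entirely in the independent part, I map the isolated edge $xy$ of $F$ to $e$, embed the remaining $f - i - 2$ non-isolated vertices of $F$ into $K_{f-i-1}$ (which contains every graph on at most $f - i - 1$ vertices as a subgraph), and place the $i$ isolated vertices of $F$ among the remaining isolated vertices of $G$. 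If $e$ joins a clique vertex $u$ to an isolated vertex $w$, a symmetric argument with $x \mapsto u$, $y \mapsto w$ works. This yields $\sat(n, F) \le \binom{f-i-1}{2}$, a constant independent of $n$.

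The main obstacle is upgrading this ``$O(1)$'' upper bound to the sharper ``$c + o(1)$'' conclusion, that is, showing that $\sat(n, F)$ is eventually constant rather than merely bounded. I would argue this by showing that when $F$ has an isolated edge, an edge-minimum $F$-saturated graph remains $F$-saturated upon deleting or adding a single isolated vertex for $n$ larger than some threshold $n_0(F)$, which pins down the limiting value. This requires additional structural analysis of extremal graphs, but the qualitative dichotomy between constant and linear behaviour already follows cleanly from the two arguments above.
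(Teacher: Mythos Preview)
The paper does not prove this statement; it is quoted as a result of K\'aszonyi and Tuza and used as a black box. So there is no in-paper proof to compare against.

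That said, your argument is correct. The ``only if'' direction is exactly the argument the paper gives for Theorem~\ref{thm:ssat_general_lower_bound}(1) (stated there for semi-saturation, which a fortiori bounds saturation from below). Your clique-plus-isolates construction for the ``if'' direction is valid and yields $\sat(n,F)\le\binom{f-i-1}{2}$ for all $n\ge f$.

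You are also right that upgrading $O(1)$ to $c+o(1)$ (equivalently, eventual constancy, since $\sat$ is integer-valued) needs one more step, and your sketch can be completed. One clean way: any $F$-saturated $G$ with $|E(G)|\le C$ has at least $n-2C$ isolated vertices, hence at least two for large $n$. Adding a further isolated vertex $z$ keeps the graph $F$-free (a putative copy using $z$ would use it as an isolated vertex of $F$, and for $n\ge f$ that isolated vertex can be relocated into $G$, contradicting $F$-freeness of $G$); and for any new non-edge $zw$, pick an isolated $u\ne w$ in $G$ and note that the transposition $(u\ z)$ is an automorphism of $G\cup\{z\}$, so $(G\cup\{z\})+zw\cong (G\cup\{z\})+uw\supseteq G+uw\supseteq F$. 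Hence $\sat(n+1,F)\le\sat(n,F)$. Conversely, deleting an isolated vertex $v$ from $G$ preserves $F$-saturation, because any copy of $F$ in $G+e$ that uses $v$ uses it as an isolated vertex, and for $n>f$ some unused vertex of $G-v$ can replace it; hence $\sat(n-1,F)\le\sat(n,F)$. Together these force $\sat(n,F)$ to be eventually constant.
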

With this in mind, in Section~\ref{sec:isolated} we explore whether a similar phenomenon holds for proper rainbow saturation. In particular, we prove that $\prsat(n, H \cup K_2)$ is bounded by a constant depending on $H$ whenever $H$ is a star or a matching.

In Section~\ref{sec:k4}, we prove Theorem~\ref{thm:K4construction}. The connection between $\prsat(n,H)$ and $\sat(n,\mathcal{F}^*(H))$ is explored in Section~\ref{sec:rel}, where Theorems~\ref{thm:Kk_RF_bounds}, \ref{thm:Ck_upper_satast}, and \ref{thm:Kkl_upper} are proved. Theorem~\ref{thm:ssat_general_lower_bound} is proved in Section~\ref{sec:isolated}, where we also explore the question of whether having an isolated edge is a necessary and sufficient condition for the proper rainbow saturation number to be constant. We conclude in Section~\ref{sec:conc} with some interesting open questions and directions for future research.

\section{An upper bound on $\prsat(n,K_4)$}\label{sec:k4}

Theorem~\ref{thm:K4construction} is an immediate consequence of the following, more technical, proposition.

\begin{proposition}\label{prop:K4construction}Let $n \ge 6$. Then
    $\prsat(n,K_4) \le \frac{7n}{2}+ \frac{1}{2}\left(n-4\left\lceil \frac{n-2}{4} \right\rceil \right)^2-8.$
\end{proposition}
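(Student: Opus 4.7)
The plan is to construct an explicit properly rainbow $K_4$-saturated graph $G_n$ matching the bound.

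\textbf{Construction.} Let $q := \lceil (n-2)/4 \rceil$ and $s := n - 2 - 4(q-1) \in \{1,2,3,4\}$. Take two hub vertices $u, v$ and partition the remaining $n-2$ vertices into pages $W_1, \ldots, W_q$ with $|W_i| = 4$ for $i < q$ and $|W_q| = s$. Make each $\{u, v\} \cup W_i$ a clique, so $G_n$ is a book of $q-1$ copies of $K_6$ together with one copy of $K_{s+2}$, all sharing the edge $uv$. A direct count gives $|E(G_n)| = 1 + 14(q-1) + \binom{s+2}{2} - 1$, which equals the claimed bound upon substituting $n - 4q = 2-s$.

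\textbf{Condition (1).} Color each $K_6$-page with a proper $5$-edge-coloring whose classes are the five perfect matchings of $K_6$. Restricting these five matchings to any $4$ vertices of the page gives multiplicities summing to $6$ over $5$ classes, so some matching meets the corresponding $K_4$ in two edges, and no such $K_4$ is rainbow. Arrange the colorings so that $uv$ receives a common color $c_0$ in every page and all other colors are disjoint across pages, making the global coloring proper at $u$ and $v$. Color the exceptional page $K_{s+2}$ analogously (with at most $5$ colors, disjoint from colors used elsewhere) so that no $K_4$ inside it is rainbow.

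\textbf{Condition (2).} The non-edges of $G_n$ are precisely pairs $ww'$ with $w \in W_i$, $w' \in W_j$, $i \ne j$. Adding such an edge produces a unique new $K_4$, namely $\{u, v, w, w'\}$, since $u, v$ are the only common neighbors of $w, w'$. Let $\phi$ be a proper coloring of $G_n + ww'$ under which no $K_4$ of $G_n$ is rainbow. The argument rests on the following \emph{Key Claim}: any proper edge-coloring of $K_6$ with no rainbow $K_4$ uses exactly $5$ colors, so its color classes are the five perfect matchings. Granted this, $\phi$ restricts on each $K_6$-page to such a perfect-matching coloring; in particular every $w$ in a $K_6$-page has an incident $c_0$-edge in that page, where $c_0 := \phi(uv)$. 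Because at most one page is exceptional and $i \ne j$, at least one of $w, w'$ lies in a $K_6$-page. Now examine the three opposite-edge pairs of the new $K_4$: (i) $(uv, ww')$ cannot share color $c_0$, since the $K_6$-page side of the new edge already meets a $c_0$-edge; (ii) $(uw, vw')$ cannot share a color $c$, because if $w$ lies in a $K_6$-page then the perfect matching of that page containing $uw$ also contains a $c$-colored edge at $v$, giving two $c$-edges at $v$ and violating properness (the case $w'$ in a $K_6$-page is symmetric at $u$); (iii) is symmetric to (ii). So $\{u, v, w, w'\}$ must be rainbow, a contradiction.

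The main obstacle is the Key Claim. The strategy is to assume a proper coloring of $K_6$ uses at least $6$ colors and produce a rainbow $K_4$: since $|E(K_6)| = 15$, some color class has size at most $2$, and a case analysis on the structure of this small class identifies four vertices whose six edges lie in six different color classes.
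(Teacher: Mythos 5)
Your construction is identical to the paper's: $G \cong K_2 + \bigl((k-1)K_4 \cup K_m\bigr)$, the ``book'' of $q-1$ copies of $K_6$ and one copy of $K_{s+2}$ sharing the edge $uv$. The edge count and the argument for condition (1) both check out and match the paper. For condition (2) the routes differ a little: the paper extracts a fixed $7$-vertex subgraph $H' \cong K_3 + (K_3\cup K_1)$ from $G+ww'$ and proves $H' \in \mathcal{F}^\ast(K_4)$ via a two-case analysis on the number of colours appearing in the $K_6$-page, whereas you argue directly that the new $K_4$ on $\{u,v,w,w'\}$ must be rainbow using the perfect-matching structure of each $K_6$-page. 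Your opposite-pair argument (i)--(iii) is correct, including the care that at least one of $w,w'$ lies in a genuine $K_6$-page. In content the two approaches are essentially the same, because both boil down to knowing the colouring on a $K_6$-page is a $1$-factorisation.

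The genuine gap is your Key Claim: that a proper edge-colouring of $K_6$ with no rainbow $K_4$ uses exactly $5$ colours and hence is a $1$-factorisation. You state this is ``the main obstacle'' and only sketch a proof by ``a case analysis on the structure of this small class.'' That sketch is not convincing: knowing that one colour class is small gives no direct handle on where a rainbow $K_4$ must appear, and I do not see how to complete it along those lines. The paper's proof of the equivalent fact is a short global count that you should use instead: if at least $6$ colours are used, every colour class $E_c$ is a matching of size at most $3$, and the number of ``spoiled'' $4$-sets (those containing two edges of the same colour) is at most $\sum_c \binom{|E_c|}{2}$. Writing $x_i$ for the number of classes of size $i$, we have $x_1+2x_2+3x_3 = 15$ and $x_3 \le 4$, so $\sum_c \binom{|E_c|}{2} = x_2 + 3x_3 \le \frac{15-3x_3}{2} + 3x_3 = \frac{15+3x_3}{2} \le \frac{27}{2} < 15 = \binom{6}{4}$, forcing some $4$-set to be rainbow. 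With this in place your proof of condition (2) is complete. (Also note the claimed second half of the Key Claim, that the $5$ colour classes are perfect matchings, follows immediately: five matchings of size at most $3$ covering $15$ edges must each have size exactly $3$.)
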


In order to prove Proposition~\ref{prop:K4construction}, we have the following construction.
\begin{construction} \label{cns:K4}
Let $n\ge 6$.
Let $k:=\cf{n-2}{4}$ and $m:=n-4k+2$.
Define the graphs $G\cong K_2+((k-1)K_4\cup K_m)$ and $H\cong K_3+(K_3\cup K_1)$.
Then:
\begin{enumerate}
    \item[(1)] $G \notin \ca{F}^*(K_4)$, i.e., there exists a rainbow $K_4$-free proper colouring of $G$.
    \item[(2)] $G$ is $H$-saturated.
    \item[(3)] $H \in \ca{F}^*(K_4)$, i.e., every proper colouring of $H$ contains a rainbow copy of $K_4$.
\end{enumerate}
In particular, $G$ is a properly rainbow $K_4$-saturated graph on $n$ vertices.
\end{construction}
\begin{figure}
    \centering
    \includegraphics{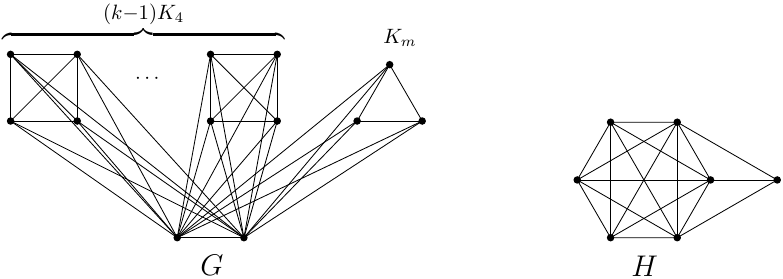}
    \caption{Graphs $G$ and $H$ from Construction~\ref{cns:K4}}
    \label{fig:k4prsat}
\end{figure}

Observe that Construction~\ref{cns:K4} implies Proposition~\ref{prop:K4construction}, since for all $n \ge 6$,
\[ \prsat(n,K_4) \le |E(G)| = \frac72 n +\frac12 (n-4k)^2-8. \]
Thus, to prove Theorem~\ref{thm:K4construction} and Proposition~\ref{prop:K4construction}, it suffices to prove Construction~\ref{cns:K4}.
\begin{proof}[Proof of Construction~\ref{cns:K4}]
We will define the graph $G$ as follows.
Let $u\neq v$ be vertices, let $A_1,\ldots,A_k$ be pairwise disjoint sets containing neither $u$ nor $v$ with $|A_i|=4$ for all $i\le k-1$ and $|A_k|=m$.
Let $V(G):=\{u,v\}\cup \bigcup_{i=1}^k A_k$ and $E(G):=\bigcup_{i=1}^k (A_k\cup\{u,v\})^{(2)}$.
For each $i\in [k]$, let $H_j:=G[A_k\cup\{u,v\}]$.
Note that we indeed have $G\cong K_2+((k-1)K_4\cup K_m)$.

We will first prove (1).
Colour the edge $uv$ with colour $c$.
Note that there is a proper $5$-colouring of $K_6$.
So let $C_1,\ldots,C_k$ be pairwise disjoint $4$-sets of colours, and for each $i \in [k]$, properly colour the edges in $H_i-uv$ with colours in $C_i \cup \{c\}$; this results in a proper colouring of $G$ because $H_i$ only shares one edge and one colour with each $H_j$ with $j\neq i$.
Any copy of $K_4$ is contained in some $H_i$, since it is missing an edge otherwise.
Each $H_i$ is coloured with at most $5$ colours, so any copy of $K_4$ in $H_i$ has at most $5<|E(K_4)|$ edges, i.e., is not rainbow.
Therefore, $G$, does not contain a rainbow copy of $K_4$, proving (1).

We will now prove (2)--(3).
Take any $xy \in E(\overline{G})$ and let $G' := G+xy$.
Without loss of generality, $x \in A_j$ and $y \in A_\ell$ for some $1\le j<\ell \le k$.
Observe that $G$ is $H$-free and
\[ H' := (G+xy)[A_j\cup \{y\}] \cong H; \]
this proves (2).

Now, colour $H'$ with a proper colouring $\phi$.
Let $C := \phi(E(H_j))$.

\textbf{Case 1:}
Suppose $|C| = 5$.
Let $S := N_{G'}(y)\cap V(H_j) = \{x,u,v\}$.
For each $a \in S$, $d_{H_j}(a) = 5$, so $a$ is incident to edges of all $5$ colours in $C$.
So the edges from $y$ to vertices in $H_j$ must have pairwise distinct colours not in $C$.
$G'[S]$ is a triangle, so it must be rainbow, with colours in $C$.
So $\{y\} \cup S$ induces a rainbow copy of $K_4$ in $G'$.

\textbf{Case 2:}
Now suppose $|C|>5$.
For each colour $c$, let $E_c := \{e \in E(H_j):\phi(e)=c\}$.
Since the colouring is proper, each colour class is a matching, so $|E_c| \le \lfloor \frac{|V(H_j)|}{2} \rfloor = 3$ for all $c \in C$.
For $i\in [3]$, let $x_i := |\{c \in C: |E_c| = i\}|$, so that $x_1+2x_2+3x_3 = |E(H_j)| = 15$.
For a colour $c$ and a copy $H'$ of $K_4$ in $H_j$, $H'$ contains two edges of colour $c$ only if $V(H') = e_1 \cup e_2$ for some $e_1,e_2 \in E_c$ with $e_1 \neq e_2$.
For each $i \in [3]$ and $c$ with $|E_c|=i$, there are $\binom{i}{2}$ such pairs.
$|C|>5$ implies $x_3<5$, so there are at most
\[\binom{1}{2}x_1+\binom{2}{2}x_2+\binom{3}{2}x_3 \le 0+\frac{15-3x_3}{2}+3x_3 = \frac{15+3x_3}{2} \le \frac{15+3(4)}{2} = \frac{27}{2}<15\]
copies of $K_4$ in $H_j$ with a repeated colour, and there are $\binom{6}{4}=15$ total copies of $K_4$ in $H_j$, so there exists a copy of $K_4$ in $H_j$ (and thus $G'$) with no repeated colour.

Therefore, in all cases, there is a rainbow copy of $K_4$ in $H'$, proving (3).
This completes the proof of Construction~\ref{cns:K4}.
\end{proof}

\section{Utilising a relationship to the saturation number}\label{sec:rel}
For a family of graphs $\mathcal{F}$ and a graph $G$, say that $G$ is \textbf{$\mathcal{F}$-saturated} if for all $F \in \mathcal{F}$, $G$ does not contain a copy of $F$, but for all $e\in E(\overline{G})$, there exists $F \in \mathcal{F}$ such that $G+e$ contains a copy of $F$.

K\'aszonyi and Tuza \cite{Kaszonyi1986} have proven that the saturation number $\sat(n,\mathcal{F})$ is linear for all families of graphs $\mathcal{F}$.
To state their result, we denote $\alpha(F)$ as the independence number of $F$ and $\ca{I}(F)$ as the set of independent sets in $F$.
They define the following parameters on the family of graphs $\mathcal{F}$:
\[
u = u(\mathcal{F}) = \mathrm{min}\{|V(F)|-\alpha(F)-1:F \in \mathcal{F}\}
\]
\[
d = d(\mathcal{F}) = \min\{|N(x)\cap S|:F \in \mathcal{F}, S \in \ca{I}(F), |S|=|V(F)|-u-1, x \in V(F)\setminus S\}
\]

We will also denote $u(F) = u(\{F\})$ and $d(F) = d(\{F\})$ for all graphs $F$.

\begin{theorem}[K\'aszonyi, Tuza \cite{Kaszonyi1986}] \label{thm:weaker_sat_general_upper_bound}
$\sat(n,\mathcal{F}) \le un+\lfloor(d-1)(n-u)/2\rfloor-\binom{u+1}{2}$ for $n$ large enough.
\end{theorem}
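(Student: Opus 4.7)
The goal is to exhibit, for each sufficiently large $n$, a single $\mathcal{F}$-saturated graph $G$ on $n$ vertices with $|E(G)| \le un+\lfloor(d-1)(n-u)/2\rfloor-\binom{u+1}{2}$. As preparation, fix a graph $F^{*}\in\mathcal{F}$ attaining $|V(F^{*})|-\alpha(F^{*})-1=u$, together with a maximum independent set $S^{*}\subseteq V(F^{*})$ of size $|V(F^{*})|-u-1$ and a vertex $x^{*}\in V(F^{*})\setminus S^{*}$ witnessing $|N_{F^{*}}(x^{*})\cap S^{*}|=d$. Label the remaining $u$ vertices as $V(F^{*})\setminus (S^{*}\cup\{x^{*}\})=\{t_1,\ldots,t_u\}$; this provides the template for the ``apex'' of the construction.

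I would then take $V(G)=U\sqcup W$ with $|U|=u$ and $|W|=n-u$, where $U$ induces $K_u$ and is joined completely to $W$ (mirroring the role of $\{t_1,\ldots,t_u\}$), and where $H:=G[W]$ is chosen to have exactly $\lfloor(d-1)(n-u)/2\rfloor$ edges and maximum degree at most $d-1$ --- taking $H$ to be $(d-1)$-regular when parity permits, and otherwise near-regular with a single vertex of degree $d-2$. A direct count gives
\[ |E(G)|=\binom{u}{2}+u(n-u)+|E(H)|=un-\binom{u+1}{2}+\left\lfloor\frac{(d-1)(n-u)}{2}\right\rfloor, \]
as required.

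The ``saturation'' direction is the routine half. Any non-edge $e=xy$ of $G$ must lie inside $W$. I would embed $F^{*}$ into $G+e$ by sending $x^{*}\mapsto x$, sending $t_1,\ldots,t_u$ bijectively onto $U$, sending $N_{F^{*}}(x^{*})\cap S^{*}$ (of size $d$) bijectively onto $N_H(x)\cup\{y\}$ (also of size $d$, using $d_H(x)=d-1$; if $x$ happens to be the parity-exceptional vertex, swap the roles of $x$ and $y$), and finally placing the remaining vertices of $S^{*}$ into arbitrary distinct vertices of $W$ (which is possible once $n\ge|V(F^{*})|$). Edges incident to $U$ are present by universality, the $d$ edges from $x^{*}$ into $S^{*}$ are realised by the chosen neighbourhood of $x$, and since $S^{*}$ is independent in $F^{*}$ there are no further edge requirements.

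The main obstacle is verifying that $G$ is $\mathcal{F}$-free. Suppose for contradiction some $F\in\mathcal{F}$ embeds via $\phi$, and set $T=\phi^{-1}(U)$, $A=\phi^{-1}(W)$. Then $|T|\le u$ forces $|A|\ge|V(F)|-u\ge\alpha(F)+1$, so $A$ is not independent in $F$ and $F[A]$ must embed into $H[\phi(A)]$. The delicate point is that beyond the maximum-degree bound $\Delta(H)\le d-1$ one has to exploit finer structural features of $H$: for instance, by taking $H$ to be a disjoint union of cliques $K_d$, or a $(d-1)$-regular graph of sufficiently large girth, chosen so that no subgraph $F[A']$ with $F\in\mathcal{F}$ and $|A'|\ge\alpha(F)+1$ can embed in $H$. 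The key structural step, where the definitions of $u$ and $d$ enter in earnest, is to prove that every such $F[A']$ either contains a vertex of degree at least $d$ or otherwise cannot fit inside the tailored $H$; the hypothesis that $n$ is large enough provides exactly the room in $W$ needed for $H$ to be simultaneously near-regular and to avoid every such forbidden configuration.
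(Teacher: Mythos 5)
Your approach builds $H$ explicitly as a $(d-1)$-regular (or near-regular) graph on $W$ and then tries to verify both halves of saturation for the fixed graph $K_u+H$. This is a genuinely different strategy from the paper's, and the half you flag as the ``main obstacle'' is exactly where it breaks down: you never establish that $K_u+H$ is $\mathcal{F}$-free. You correctly observe that any embedded $F$ would put at least $\alpha(F)+1$ vertices of $F$ into $W$, so $F[A]$ must embed into $H$; but this is not a contradiction, and the degree bound $\Delta(H)\le d-1$ alone does not rule such an embedding out. The suggested ad hoc refinements (disjoint unions of $K_d$'s, high girth $(d-1)$-regular graphs) are left without proof that they work for an arbitrary family $\mathcal{F}$, and there is no reason to expect a single explicit near-regular $H$ with the maximum permitted number of edges to always avoid all of $\mathcal{F}$ inside $K_u+H$.

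The paper (following K\'aszonyi and Tuza) sidesteps this difficulty entirely by not fixing $H$ in advance. Start from $G_0=K_u+\overline{K_{n-u}}$; this is $\mathcal{F}$-free because any copy of $F\in\mathcal{F}$ would leave at least $|V(F)|-u\ge \alpha(F)+1$ of its vertices inside the independent set $W$. Then greedily add edges inside $W$ one at a time, subject only to preserving $\mathcal{F}$-freeness; when the process stops, the resulting $G^*=K_u+H^*$ is $\mathcal{F}$-saturated by construction. The only thing left to prove is that $\Delta(H^*)\le d-1$, and this is precisely where your embedding argument belongs, but used in reverse: if some $x\in W$ had $d$ or more $H^*$-neighbours $y_1,\dots,y_d$, one could map $x^*\mapsto x$, the $u$ vertices of $V(F^*)\setminus(S^*\cup\{x^*\})$ onto $U$, $N_{F^*}(x^*)\cap S^*$ onto $\{y_1,\dots,y_d\}$, and the rest of $S^*$ onto spare vertices of $W$, producing a copy of $F^*$ in $G^*$ and contradicting $\mathcal{F}$-freeness. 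The degree bound then gives $|E(H^*)|\le\lfloor(d-1)(n-u)/2\rfloor$ and hence the stated bound. So the embedding you wrote down is the right ingredient, but the greedy construction is what makes the $\mathcal{F}$-freeness half come for free; your version of that half remains a genuine gap.
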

Theorem~\ref{thm:weaker_sat_general_upper_bound} follows from the fact that if we take the graph $G=K_u+\overline{K_{n-u}}$ and add edges $e$ to $G$ one by one such that $G+e$ is $\ca{F}$-free, we eventually obtain an $F$-saturated graph $K_u+H$ with $\Delta(H)<d$.
The bound in Theorem~\ref{thm:weaker_sat_general_upper_bound} applies to the size of any such graph $K_u+H$ when $n \ge u$.
(In fact, when $n \le u$, we have $\sat(n,\ca{F})=\binom{n}{2}$.)
Thus, we have the following.

\begin{theorem} \label{thm:weaker_sat_general_upper_bound_gen}
$\sat(n,\mathcal{F}) \le un+\lfloor(d-1)(n-u)/2\rfloor-\binom{u+1}{2}$ for $n\ge u$.
\end{theorem}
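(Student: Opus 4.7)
The plan is to realise the upper bound via the greedy construction already sketched immediately after Theorem~\ref{thm:weaker_sat_general_upper_bound}. I would start from $G_0 := K_u + \overline{K_{n-u}}$ on $n$ vertices (interpreted as $K_u$ when $n = u$), and add edges $e \in E(\overline{G_0})$ one at a time, maintaining $\mathcal{F}$-freeness, until the result is $\mathcal{F}$-saturated. Every such edge lies inside the independent set $V' := V(\overline{K_{n-u}})$, so the clique and the complete join are never touched, and the final $\mathcal{F}$-saturated graph has the form $G^* = K_u + H$ for some graph $H$ on $V'$. Granted that $\Delta(H) \le d - 1$, the handshake lemma yields $|E(H)| \le \lfloor (d-1)(n-u)/2 \rfloor$, and $|E(G^*)| = \binom{u}{2} + u(n-u) + |E(H)|$ simplifies via $\binom{u}{2} + u(n-u) = un - \binom{u+1}{2}$ to the advertised bound.

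Two things need to be verified. First, $G_0$ must itself be $\mathcal{F}$-free so the greedy process is well-defined. If some $F \in \mathcal{F}$ embedded in $G_0$, at most $\alpha(F)$ of its vertices could lie in the independent part, forcing at least $|V(F)| - \alpha(F) \ge u + 1$ vertices into the $K_u$-clique, which has only $u$ vertices --- a contradiction using precisely the definition of $u$. Second, once the greedy process halts at an $\mathcal{F}$-saturated $G^* = K_u + H$, I need to check $\Delta(H) \le d - 1$. Suppose instead that some $v \in V'$ has $d_H(v) \ge d$, and choose a triple $(F, S, x)$ attaining the minimum in the definition of $d$, i.e., $F \in \mathcal{F}$, $S \in \mathcal{I}(F)$ with $|S| = |V(F)| - u - 1$, $x \in V(F) \setminus S$, and $|N_F(x) \cap S| = d$. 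I would embed $F$ into $G^*$ by sending $x$ to $v$, mapping the remaining $u$ vertices of $V(F) \setminus S$ bijectively onto the $K_u$-part, mapping the $d$ vertices of $N_F(x) \cap S$ to $d$ chosen $H$-neighbours of $v$, and mapping the remaining vertices of $S$ to arbitrary unused vertices of $V' \setminus \{v\}$. Every required edge of $F$ then appears in $G^*$: edges inside the image of $V(F) \setminus S$ come from the clique together with the join from $v$; edges between $V(F) \setminus (S \cup \{x\})$ and $S$ come from the complete bipartite join between the $K_u$- and $V'$-parts; edges from $x$ to $N_F(x) \cap S$ are the chosen $H$-edges at $v$; and no edges are needed inside the image of $S$ since $S$ is independent in $F$. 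This produces a copy of $F$ in $G^*$, contradicting its $\mathcal{F}$-freeness.

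The main obstacle is the embedding step above: one must also verify that $V' \setminus \{v\}$ has enough room for the image of $S$, i.e., $n - u - 1 \ge |V(F)| - u - 1$, which together with $n \ge u$ pins down the range of validity. Once these pieces are in place, converting $\Delta(H) \le d - 1$ into the claimed upper bound on $|E(G^*)|$ is immediate from the arithmetic above.
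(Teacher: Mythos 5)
Your proof takes exactly the approach the paper sketches: start from $K_u + \overline{K_{n-u}}$, greedily add non-edges inside the independent part while preserving $\mathcal{F}$-freeness, observe the resulting $\mathcal{F}$-saturated graph has the form $K_u + H$ with $\Delta(H) \le d-1$, and do the arithmetic. The paper states this as a one-line remark (``Theorem~\ref{thm:weaker_sat_general_upper_bound} follows from the fact that\ldots''), so your write-up is essentially the same proof with the hidden verifications (that $G_0$ is $\mathcal{F}$-free, and the embedding argument for $\Delta(H)\le d-1$) made explicit. One caveat worth noting: as you observe, the embedding of the extremal $(F,S,x)$ into $G^*$ requires $n \ge |V(F)|$, not merely $n \ge u$, and this subtlety is glossed over in the paper's sketch as well; for $u \le n \le u+d$ the bound exceeds $\binom{n}{2}$ and so is trivial, but there can be a window $u+d < n < |V(F)|$ where neither the trivial bound nor the embedding applies without further argument, so the claim $\Delta(H)\le d-1$ needs a separate justification (or an explicit restriction on $n$) in that range.
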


For a graph $H$, let $\mathcal{F}^\ast(H)$ be the family of graphs that contain a rainbow copy of $H$ in all proper colourings.
\eqref{obs:relation} tells us $\prsat(n,H) = \sat(n,\mathcal{F}^\ast(H))$, which means that if we identify properties of the family $\mathcal{F}^\ast(H)$, we can find bounds on $\prsat(n,H)$.

\subsection{Complete Graphs}\label{sub:Kk}
We now apply the methodology discussed above to prove Theorem~\ref{thm:Kk_RF_bounds}, restated here for convenience.  

\KkRFbounds*

To prove Theorem~\ref{thm:Kk_RF_bounds} we will exploit a relationship between $u(G)$, for $G \in \mathcal{F}^*(K_k)$ and the \emph{rainbow Ramsey number}, introduced by Jamison, Jiang, and Ling \cite{Jamison2003}. For graphs $G_1$ and $G_2$, let $R^*(G_1,G_2)$, the \emph{rainbow Ramsey number}, be the minimum integer $n$ (if it exists) such that any edge-colouring of $K_n$ must contain either a monochromatic copy of $G_1$ or a rainbow copy of $G_2$.
Jamison, Jiang, and Ling \cite{Jamison2003} determined that $R^*(G_1,G_2)$ exists if and only if $G_1$ is a star or $G_2$ is a forest. In particular, for all graphs $H$, $R^*(P_3,H)$ exists. In the proof of Theorem~\ref{thm:Kk_RF_bounds}, we will apply the following result on $R^*(P_3,K_k)$.

\begin{theorem}[Alon, Jiang, Miller, Pritikin \cite{Alon2003}] \label{thm:Alon_rainbow_ramsey_complete}
For $k \ge 3$, $R^*(P_3,K_k) = \Theta\left(\frac{k^3}{\ln{k}}\right)$.
\end{theorem}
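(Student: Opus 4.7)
The plan is to first reduce to a cleaner question about proper colourings and then handle the upper and lower bounds separately by probabilistic arguments. The key observation is that an edge-colouring of $K_n$ contains no monochromatic $P_3$ precisely when it is proper, so $R^*(P_3,K_k)$ equals the smallest $n$ such that every proper edge-colouring of $K_n$ contains a rainbow $K_k$. This reformulation is what makes both bounds tractable, because in a proper colouring each colour class is a matching of size at most $n/2$, which gives us both structural control for the upper bound and an enumeration framework (e.g., $1$-factorizations) for the lower bound.

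For the upper bound $R^*(P_3,K_k) = O(k^3/\ln k)$, I would use random sampling with alteration. Given a proper edge-colouring $\phi$ of $K_n$, sample a vertex set $S \subseteq V(K_n)$ by including each vertex independently with some probability $p$. The expected size of $S$ is $pn$, and the expected number of colour conflicts (pairs of same-coloured edges both spanned by $S$) is at most $p^4 \sum_c \binom{|\phi^{-1}(c)|}{2} \le p^4 n^3/8$, using $|\phi^{-1}(c)| \le n/2$ and $\sum_c |\phi^{-1}(c)| = \binom{n}{2}$. Removing one endpoint per conflict leaves a rainbow subclique, and a direct optimisation in $p$ already yields the weaker bound $R^*(P_3,K_k) = O(k^3)$. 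To shave the $\ln k$ factor, I would exploit the matching structure more carefully: either apply the Lovász Local Lemma to the $4$-tuples of vertices that carry a colour conflict (whose dependency graph has controlled degree because each matching contributes only $O(n)$ conflict pairs and each vertex lies in few of them), or use a two-stage argument that first deletes deterministically the vertices with too many conflict partners and then samples on the residual graph, where the average degree bound improves by a logarithmic factor.

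For the lower bound $R^*(P_3,K_k) = \Omega(k^3/\ln k)$, I would construct a proper edge-colouring of $K_n$ avoiding rainbow $K_k$ for $n = c k^3/\ln k$ with $c$ small. The natural choice is a uniformly random $1$-factorization of $K_n$ (for $n$ even), colouring edges by which perfect matching they land in; a fixed $K_k$ is then rainbow exactly when no two of its $\binom{k}{2}$ edges share a matching. A careful estimate for the probability that a given $K_k$ is rainbow, combined with a union bound over the $\binom{n}{k}$ potential cliques, should show that the expected number of rainbow $K_k$'s is strictly less than $1$ at this $n$, whence some proper colouring avoids them. The main obstacle in both directions is precisely the $\ln k$ factor: the straightforward expectation arguments give only $\Theta(k^3)$, so the upper bound needs either an LLL-based dependency argument or an iterated alteration that separates high- and low-conflict vertices, and the lower bound needs sharp probability estimates for rainbow events in a random $1$-factorization, which are delicate because of the correlations imposed by the proper-colouring constraint.
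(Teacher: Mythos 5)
First, a point of reference: the paper does not prove Theorem~\ref{thm:Alon_rainbow_ramsey_complete} at all --- it is quoted from Alon, Jiang, Miller and Pritikin \cite{Alon2003} and used as a black box --- so there is no in-paper proof to compare against. Judged on its own merits, your proposal begins correctly: a colouring of $K_n$ with no monochromatic $P_3=K_{1,2}$ is exactly a proper edge-colouring, so $R^*(P_3,K_k)$ is the least $n$ forcing a rainbow $K_k$ in every proper colouring; and your sampling-plus-deletion computation (expected conflicts at most $p^4n^3/8$, optimise $p$) soundly yields $R^*(P_3,K_k)=O(k^3)$. But the entire content of the theorem beyond the easy bounds is the factor of $\ln k$, and in both directions your sketch stops exactly where the real work begins --- you correctly identify the $\ln k$ factor as ``the main obstacle'' and then do not overcome it.

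On the upper bound, neither route you name produces the logarithm. Applying the Local Lemma to the $4$-uniform ``conflict'' hypergraph $\mathcal{H}$ (whose edges are the vertex sets of pairs of disjoint equal-coloured edges, so $|E(\mathcal{H})|\le n^3/8$) gives an independent set of the same order $n^{1/3}$ as plain deletion, and pre-deleting vertices with many conflict partners cannot lower the average degree by a logarithmic factor, since $\mathcal{H}$ may be essentially regular. The gain actually comes from the Ajtai--Koml\'os--Pintz--Spencer--Szemer\'edi theorem on independent sets in uncrowded hypergraphs (in the Duke--Lefmann--R\"odl form): a $4$-uniform hypergraph with $n$ vertices, $O(n^3)$ edges and few $2$-cycles contains an independent set of size $\Omega\left(n^{1/3}(\log n)^{1/3}\right)$, and one must additionally verify, after a cleaning step, that the conflict hypergraph of a proper colouring is sufficiently uncrowded. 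This semi-random, nibble-type input is the heart of the argument and is absent from your sketch. On the lower bound, your first-moment heuristic has the right numerology, but a uniformly random $1$-factorization is not a workable probability space: estimating the probability that a fixed $k$-set is rainbow requires controlling precisely the correlations that make uniform $1$-factorizations hard to analyse, and you supply no such estimate (none was available in 2003, and such estimates remain delicate). The known proof instead builds a randomized proper colouring for which the events ``two disjoint edges receive the same colour'' are tractable and nearly independent. As written, then, the proposal establishes only $R^*(P_3,K_k)=O(k^3)$, not the stated $\Theta\!\left(k^3/\ln k\right)$.
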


We are now prepared to prove Theorem~\ref{thm:Kk_RF_bounds}.

\begin{proof}[Proof of Theorem~\ref{thm:Kk_RF_bounds}]
Let $r = R^*(P_3,K_{k-1})$, $u = u(\mathcal{F}^\ast(K_k))$, $d = d(\mathcal{F}^\ast(K_k))$, and $\ell = \binom{k-1}{2}(k-3)-r+k$.
We will show that $u=r-1$ and $d\le (k-3)\binom{k-1}{2}-r+k$.
Let $H \cong K_r$, let $J \cong \overline{K_\ell}$, and let $G = H+J$.
\begin{claim*}
$G \in \ca{F}^\ast(K_k)$ and $u \le u(G) = r-1$.
\end{claim*}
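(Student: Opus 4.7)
The plan has two parts: a quick computation verifying $u(G) = r-1$, and the substantive claim that $G \in \mathcal{F}^\ast(K_k)$, from which $u = u(\mathcal{F}^\ast(K_k)) \le u(G) = r-1$ follows immediately by the definition of $u$ as a minimum over the family. For the first part, I would observe that any independent set in $G = K_r + \overline{K_\ell}$ consists of either a single vertex of $H$ or a subset of $J$, so $\alpha(G) = \ell$ and $u(G) = (r+\ell) - \ell - 1 = r-1$.

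For the main part, fix any proper edge colouring $\phi$ of $G$. Since $\phi$ restricted to $H \cong K_r$ is proper and hence contains no monochromatic $P_3$, the definition of $r = R^\ast(P_3, K_{k-1})$ forces a rainbow $K_{k-1}$ in $H$; call such a copy $H'$, with vertex set $W$ and colour set $C$, where $|C| = \binom{k-1}{2}$. The aim is to find a vertex $x \in V(G) \setminus W$ such that every edge $xw$ with $w \in W$ carries a colour outside $C$; since $x$ is joined to all of $W$ in $G$ and properness ensures the $k-1$ colours on those edges are pairwise distinct, $\{x\} \cup W$ would then induce the desired rainbow $K_k$.

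The key step is a counting argument. Say $x \in V(G) \setminus W$ is \emph{blocked} if $\phi(xw) \in C$ for some $w \in W$. For each colour $c \in C$, write $c = \phi(w_iw_j)$ with $w_iw_j \in E(H')$; the colour class of $c$ is a matching, so $w_i$ and $w_j$ have no further colour-$c$ edges, each of the remaining $k-3$ vertices of $W$ is incident to at most one additional colour-$c$ edge, and those additional edges have pairwise distinct endpoints outside $W$ (again by properness). Hence each colour blocks at most $k-3$ vertices, and summing over $C$ yields at most $\binom{k-1}{2}(k-3)$ blocked vertices in total. The value of $\ell$ is calibrated precisely so that
\[ |V(G) \setminus W| = r + \ell - (k-1) = \binom{k-1}{2}(k-3) + 1, \]
guaranteeing at least one unblocked $x$ and completing the rainbow $K_k$. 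The main obstacle, I expect, is noticing that one should count candidate extensions over all of $V(G) \setminus W$ rather than only over $J$: the naive count restricted to $J$ would require $\ell > \binom{k-1}{2}(k-3)$, which fails badly since $r$ is generally much larger than $k$, and it is only by pooling extension vertices from $V(H)\setminus W$ together with those in $J$ that the calibration of $\ell$ delivers exactly one surplus vertex.
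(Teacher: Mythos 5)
Your proof is correct and follows essentially the same approach as the paper: invoke $r = R^\ast(P_3, K_{k-1})$ to get a rainbow $K_{k-1}$ inside $K_r$, then bound the number of colour-conflicting extension vertices by $\binom{k-1}{2}(k-3)$ and note the total supply of extension vertices (pooled from both $V(H)\setminus W$ and $J$) exceeds this by exactly one. In fact your arithmetic $|V(G)\setminus W| = r + \ell - (k-1) = \binom{k-1}{2}(k-3) + 1$ is a bit cleaner than the paper's, which has a small off-by-one ($r+\ell-k$) in the displayed count, and your emphasis on counting extensions over all of $V(G)\setminus W$ rather than over $J$ alone is the right thing to flag.
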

\begin{proof}[Proof of Claim]
Let $G$ be coloured with any proper colouring.
Because $H \cong K_r$, $H$ contains a rainbow copy $F$ of $K_{k-1}$.

For each colour $c$ present in $F$, exactly one edge $uv\in E(F)$ receives colour $c$, and each $w\in V(F)\setminus\{u,v\}$ is incident to at most one edge of colour $c$, so there are $\le k-3$ vertices $y\in V(G)\setminus V(F)$ such that $G[V(F)\cup\{y\}]$ has the repeated colour $c$.
Thus, since $|V(G)\setminus V(F)| = r+\ell-k >\binom{k-1}{2}(k-3)$, there exists $y \in V(G)\setminus V(F)$ such that $G[V(F)\cup\{y\}]$ is rainbow.

So $G \in \mathcal{F}^\ast(K_k)$.
Now $\alpha(G) = \ell$, so $u \le u(G) = |V(G)|-\ell-1 = r-1$, as desired.
\end{proof}

\begin{claim*}
$u \ge r-1$.
\end{claim*}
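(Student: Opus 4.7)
The plan is to prove the contrapositive: any graph $F$ with $|V(F)|-\alpha(F) \le r-1$ satisfies $F \notin \ca{F}^\ast(K_k)$, which immediately yields $u(F') \ge r-1$ for every $F' \in \ca{F}^\ast(K_k)$, hence $u \ge r-1$. So suppose $F$ is such a graph, fix a maximum independent set $S \subseteq V(F)$, and set $T := V(F)\setminus S$, so that $|T| \le r-1$. The goal is to exhibit a proper edge colouring of $F$ with no rainbow $K_k$.

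The proof splits into two ingredients. First, by the very definition of the rainbow Ramsey number $r = R^\ast(P_3,K_{k-1})$, there exists a proper edge colouring of $K_{r-1}$ (no monochromatic $P_3$) that contains no rainbow copy of $K_{k-1}$. Embedding $F[T]$ into $K_{r-1}$ and restricting this colouring gives a proper edge colouring $\phi_1$ of $F[T]$ in which no copy of $K_{k-1}$ is rainbow. Second, I would extend $\phi_1$ to a proper edge colouring $\phi$ of all of $F$. Since $S$ is independent, every edge of $F$ outside $F[T]$ has exactly one endpoint in $S$ and one in $T$; these edges form a bipartite subgraph, and I can colour them properly using a disjoint palette of fresh colours (assigning each such edge its own brand-new colour trivially works). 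The result $\phi$ is a proper edge colouring of $F$.

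Finally, I would observe that the independence of $S$ forces any copy of $K_k$ in $F$ to have at most one vertex in $S$, so at least $k-1$ of its vertices lie in $T$. Hence every $K_k \subseteq F$ contains a $K_{k-1}$ whose edges lie entirely in $F[T]$, and by our choice of $\phi_1$ this $K_{k-1}$ is not rainbow; therefore the $K_k$ is not rainbow under $\phi$ either. So $\phi$ witnesses $F \notin \ca{F}^\ast(K_k)$, completing the proof.

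The whole argument is largely structural rather than computational. The main conceptual step is recognising that an independent set $S$ of size $\alpha(F)$ forces every $K_k$ in $F$ to have at least $k-1$ vertices in $T$, so that a non-rainbow $K_{k-1}$-colouring of the small side $T$ (which exists precisely when $|T| < r$ by definition of $R^\ast(P_3,K_{k-1})$) suffices to kill all rainbow $K_k$'s globally. The only mildly technical point is the extension step, but since $S$ is independent the extension is a routine application of proper colouring on a bipartite subgraph with fresh colours, so no serious obstacle arises.
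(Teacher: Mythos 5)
Your proposal is correct and follows essentially the same route as the paper: prove the contrapositive by fixing a maximum independent set $S$, properly colour the complement $T$ (of size $\le r-1$) using the definition of $R^\ast(P_3,K_{k-1})$ to avoid rainbow $K_{k-1}$'s, and note that any $K_k$ meets $S$ in at most one vertex so must contain a non-rainbow $K_{k-1}$ inside $T$. The only difference is that you make the extension of the colouring to the $S$-$T$ edges explicit, which the paper leaves implicit; this is a minor completeness improvement rather than a different approach.
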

\begin{proof}[Proof of Claim]
Let $G'$ be a graph with $u(G') \le r-2$, let $I\subseteq V(G')$ be an independent set with $|I| = |V(G')|-u(G')-1 \ge |V(G')|-r+1$, and let $X' = V(G')\setminus I$.
Because $|X'|< r$, $X'$ can be properly coloured with no rainbow copy of $K_{k-1}$.
Any copy of $K_k$ with at least $k-1$ vertices in $X'$ is thus not rainbow, and any set of $k$ vertices with at least two vertices in $I$ is missing an edge.
So $G' \notin \mathcal{F}^\ast(K_k)$.
Therefore, $u \ge r-1$, as desired.
\end{proof}

By the above two claims, $u = r-1$.
$V(J)$ is the only independent set of $G$ of size $|V(G)|-u-1$, and any $x \in V(G)\setminus V(J) = V(H)$ has $|N(x)\cap V(J)|\le |V(J)| = \ell$.
$d$ is the minimum possible $|N(x) \cap S|$ where $S$ is an independent set of some $F \in \mathcal{F}^\ast(K_k)$ of size $|V(F)|-u-1$ and $x \in V(F)\setminus S$, so $d \le \ell = \binom{k-1}{2}(k-3)-r+k$.

We will now prove (1) and (2).

(i)
By Theorem~\ref{thm:Alon_rainbow_ramsey_complete}, $r=\Theta(k^3n/\ln{k})$, so by Theorem~\ref{thm:weaker_sat_general_upper_bound} and \eqref{obs:relation}, for all $n$ large enough,
$$\prsat(n,K_k)\le un+\frac12 dn 
    \le rn + \frac14 k^3 n 
    = \left(\frac14 k^3 + O\left( \frac{k^3}{\ln{k}} \right) \right) n.$$

(ii)
Consider $k=5$. We apply the following simple claim.
\begin{claim} \label{prop:K4_RR}
$R^*(P_3,K_4)=7$.
\end{claim}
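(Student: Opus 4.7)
The plan is to prove $R^*(P_3, K_4) = 7$ by verifying both bounds. The starting observation is that an edge-colouring of $K_n$ contains no monochromatic $P_3$ if and only if it is proper: each colour class must form a matching. So the task reduces to exhibiting (a) a proper edge-colouring of $K_6$ with no rainbow $K_4$, and (b) showing every proper edge-colouring of $K_7$ contains a rainbow $K_4$.

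For the lower bound, I would note that $K_6$ admits a proper $5$-edge-colouring since $\chi'(K_6)=5$. As a rainbow $K_4$ needs $6$ distinct colours (one per edge), no rainbow $K_4$ can appear in such a colouring. This gives $R^*(P_3,K_4) > 6$.

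For the upper bound, fix a proper edge-colouring of $K_7$ with colour classes of sizes $m_1,\dots,m_t$. Each class is a matching in $K_7$, so $m_i \le 3$ and $\sum_i m_i = \binom{7}{2} = 21$. In any proper colouring, two edges of the same colour are vertex-disjoint and hence determine a unique $K_4$ (the one on their four endpoints), and every non-rainbow $K_4$ must contain some same-coloured pair. Therefore the number of non-rainbow $K_4$'s is at most $\sum_i \binom{m_i}{2}$. By convexity of $\binom{\cdot}{2}$, this sum is maximised, subject to $m_i \le 3$ and $\sum m_i = 21$, when seven classes each have size exactly $3$, giving $\sum_i \binom{m_i}{2} \le 7\binom{3}{2}=21$. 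Since $\binom{7}{4}=35 > 21$, at least $14$ copies of $K_4$ in $K_7$ must be rainbow, completing the proof.

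No step presents a genuine obstacle: the argument is essentially a double-counting comparison between $\binom{7}{4}$ and $\sum_i \binom{m_i}{2}$, with the key constraint $m_i \le 3$ coming from the fact that a matching in a $7$-vertex graph has at most $3$ edges. The only subtle point to state carefully is that same-coloured pairs biject with the non-rainbow $K_4$'s they induce in at most a one-to-one fashion (each pair picks out a unique $4$-vertex set), which is what lets us count non-rainbow $K_4$'s by pairs rather than the other way round.
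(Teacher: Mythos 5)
Your proof is correct and takes a genuinely different route from the paper. The paper establishes both bounds by appealing to Construction~\ref{cns:K4}: the lower bound comes from property~(1) (the construction $G$ contains a properly coloured $K_6$ with no rainbow $K_4$), and the upper bound comes from property~(3) (the $7$-vertex subgraph $H\cong K_3+(K_3\cup K_1)$ of $K_7$ already forces a rainbow $K_4$ in every proper colouring, which is established via a case split on whether the $K_6$ inside $H$ is coloured with exactly $5$ colours or more). Your argument is self-contained and works directly in $K_7$: the convexity bound $\sum_i \binom{m_i}{2}\le 21$ against $\binom{7}{4}=35$ closes the gap in one step, with no case analysis, and in fact yields the stronger conclusion that every proper colouring of $K_7$ has at least $14$ rainbow copies of $K_4$. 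It is worth noting why the paper needed the extra structure: the analogous counting inside $K_6$ does not work on its own (a proper $5$-colouring of $K_6$ has $\sum_i\binom{m_i}{2}=15=\binom{6}{4}$, so the inequality is not strict), which is exactly what forces the paper's Case~1/Case~2 split. Your choice of $K_7$ sidesteps that degeneracy cleanly. One small presentational point: when you invoke convexity, it helps to note that since $21=7\cdot 3$ and each $m_i\le 3$, the maximiser of $\sum_i\binom{m_i}{2}$ is attained with seven classes of size exactly $3$; this is what you wrote, and it is correct, but stating that $\binom{\cdot}{2}$ is convex and that the feasible region's extreme points have all $m_i\in\{0,3\}$ makes the step airtight.
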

\begin{proof}
Consider the graphs $G$ and $H$ from Construction~\ref{cns:K4}.
By property (1) of Construction~\ref{cns:K4}, $G$, which contains a copy of $K_6$, can be properly coloured with no rainbow copy of $K_4$.
In Construction~\ref{cns:K4}, property (3) shows that the graph $H$ is a graph of order $7$ that contains a rainbow copy of $K_4$ in all proper colourings.
Therefore, $R^*(P_3,K_4) = 7$.
\end{proof}

By Claim~\ref{prop:K4_RR}, $r=R^*(P_3,K_4)=7$, so $u=6$.
Also, $d \le (5-3)\binom{5-1}{2}-7+4 = 9$.
So by Theorem \ref{thm:weaker_sat_general_upper_bound_gen} and \eqref{obs:relation}, for all $n\ge u = 6$,
\[ \prsat(n,K_5) = \sat(n,\mathcal{F}^\ast(K_5))  \le 6n+\left\lfloor\frac{9(n-6)}{2}\right\rfloor-\binom{7}{2} 
    = \left\lfloor\frac{21n}{2}\right\rfloor-48. \qedhere \]
\end{proof}

\subsection{Cycles}

\cycles*

\begin{proof}
Let $\ell := \lceil k/2\rceil$.
Let $u:=u(\ca{F}^*(C_k))$ and $d(\ca{F}^*(C_k))$.
We will show that $u=\ell-1$ and $d\le z$, where
\[z:= \begin{cases}
    1, & k \in \{7,9,11,\ldots\} \\
    2, & k \in \{10,12,14,\ldots\} \\
    5, & k =8.
\end{cases} \]
Let $X:= \{x_1,\ldots,x_\ell\}$ and $Y$ be a set disjoint from $X$ with $|Y|$ large (we can take $|Y| \ge 5\ell$).
Let $H$ be the complete graph on $X$, and let $I$ be the empty graph on $Y$.
Let $Y':=\{y_1'\ldots,y_z'\}$ be a $z$-subset of $Y$.
Let $G:= (H+I)-\{x_2y:y \in Y\setminus Y'\}$.

\begin{claim} \label{clm:RF(Ck)_has_rainbow_Ck}
$G \in \ca{F}^\ast(C_k)$.
\end{claim}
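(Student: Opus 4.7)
The plan is to fix an arbitrary proper edge-colouring $\phi$ of $G$ and explicitly produce a rainbow $C_k$. First I would pin down the shape of any $C_k$ in $G$: since $Y$ is independent in $G$, no two consecutive vertices of a cycle can lie in $Y$, which forces every $C_k$ to use at least $\lceil k/2\rceil = \ell$ vertices of $X$. As $|X|=\ell$, every $C_k$ in $G$ must use all of $X$, and in particular $x_2$. Because $x_2$ is adjacent in $G$ only to $(X\setminus\{x_2\})\cup Y'$, every cycle-neighbour of $x_2$ in $Y$ must lie in $Y'$. For odd $k$ the cycle has exactly one $X$--$X$ edge, so since $|Y'|=1$, the vertex $x_2$ must sit at an endpoint of this edge with its other cycle-neighbour forced to be $y_1'$. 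For even $k$ the cycle alternates fully between $X$ and $Y$, so both cycle-neighbours of $x_2$ lie in $Y'$.

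Next I would parametrise candidate cycles by a permutation $X_2,\ldots,X_\ell$ of $X\setminus\{x_2\}$ together with distinct vertices $Y_2,\ldots,Y_{\ell-1}\in Y\setminus Y'$. In the odd case the cycle reads $x_2, y_1', X_2, Y_2, \ldots, Y_{\ell-1}, X_\ell, x_2$ (with final edge $X_\ell x_2$ being the lone $X$--$X$ edge), and in the even case it reads $x_2, Y_1, X_2, Y_2, \ldots, X_\ell, Y_\ell, x_2$ with $Y_1, Y_\ell\in Y'$. The ``fixed'' edges --- those incident to $x_2$ or to a $Y'$-vertex on the cycle --- are determined once $X_2, X_\ell$ (and, when $k=8$, the ordered pair $(Y_1, Y_\ell)\in (Y')^2$) are chosen. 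Since $\phi$ is proper, for each forbidden colour there is at most one $X_j$ whose edge to a given $Y'$-vertex receives that colour; a short counting argument, using $\ell\ge 4$ in general and the extra room $|Y'|=5$ when $k=8$, then shows $X_2, X_\ell$ (and $Y_1, Y_\ell$ if $k=8$) can be chosen so that the fixed-edge colours are pairwise distinct.

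Finally I would fill in $Y_2,\ldots,Y_{\ell-1}$ greedily from $Y\setminus Y'$. At step $i$ the partial cycle uses at most $2i$ colours; by properness of $\phi$, for each such colour $c$ there is at most one $y\in Y$ with $\phi(X_i y)=c$ and at most one with $\phi(X_{i+1} y)=c$, and we must also avoid the $i-2$ previously chosen $Y_j$. Hence the number of ``bad'' candidates for $Y_i$ is at most $5i-2 \le 5\ell$, which is strictly less than $|Y\setminus Y'|$ since $|Y|\ge 5\ell$. A valid $Y_i$ therefore exists at every step, and the greedy process completes to produce a rainbow $C_k$ in $G$.

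The main obstacle will be the case analysis across the three regimes (odd $k$, even $k\ge 10$, and $k=8$), and specifically the first counting step: when $\ell$ is small there are only a few permutations of $X\setminus\{x_2\}$ available, and it is precisely the enlargement of $|Y'|$ to $5$ in the $k=8$ case that restores enough room to make all fixed-edge colours distinct.
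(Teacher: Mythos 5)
Your proof is correct and takes essentially the same approach as the paper's: first make the ``fixed'' portion of the cycle near $x_2$ and the $Y'$-vertices rainbow (splitting into the odd, even $\ge 10$, and $k=8$ regimes, with the enlarged $Y'$ supplying the slack when $k=8$), then greedily extend through $Y\setminus Y'$ with a counting argument using $|Y|\ge 5\ell$. The only difference is presentational: the paper holds more labels fixed in the initial step (varying a single $Y'$-vertex when $k=8$, or a single $X$-vertex when $k$ is odd), whereas you allow both the $X$-positions and the $Y'$-vertices to vary, but the underlying bookkeeping is the same and your sketched $k=8$ count does close.
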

\begin{proof}[Proof of Claim]
Let $\phi$ be a proper colouring of $G$.
We will begin by defining a rainbow path $p_2$ of length $\le 4$ in each of the following cases.

\textbf{Case 1:}
$k=8$.
Note that $p_1:= x_1\bc y_1'\bc x_2$ is a length-$2$ path and is thus rainbow.
For each colour $c$ present in $p_1$, since $\phi$ is proper, there are at most $2$ vertices in $\{y_2',y_3',y_4',y_5'\}$ whose edges to $\{x_2,x_3\}$ receive colour $c$; there is at most $1$ such vertex if $c=\phi(y_1'x_2)$.
So there are at most $3$ vertices $y$ in $Y$ such that $p_1,y,x_3$ is not rainbow.
Thus, since $|\{y_2',y_3',y_4',y_5'\}|=4$, there exists $y_2 \in \{y_2',y_3',y_4,y_5'\}$ such that $p_2 := p_1,y_2,x_3$ is rainbow.

\textbf{Case 2:}
$k \in \{7,9,11,\ldots\}$.
Since edges incident with $y_1'$ receive distinct colours, there is at most one vertex $x \in \{x_3,\ldots,x_\ell\}$ such that $\phi(x_1x_2) = \phi(y_1'x)$.
So because $|\{x_3,\ldots,x_\ell\}|\ge \ell-2>1$, there exists $x \in \{x_3,\ldots,x_\ell\}$ such that $p_2 := x_1,x_2,y_1',x$ is rainbow.
Without loss of generality, say $x=x_3$, and let $y_1 := y_1'$.

\textbf{Case 3:}
$k \in \{10,12,14,\ldots\}$.
Note that $p_1 := y_1'\bc x_2\bc y_2'$ is a length-$2$ path and is thus rainbow.
Since $\phi$ is proper, there is at most one vertex $x \in X\setminus\{x_2\}$ such that $\phi(xy_1') = \phi(x_2y_2')$, and $|X\setminus\{x_2\}| \ge 4 >1$, so suppose without loss of generality that $\phi(x_1y_1') \neq \phi(x_2y_2')$.
Now there are at most two vertices $x \in X\setminus\{x_1,x_2\}$ such that $\phi(y_2'x) \in \{\phi(x_1y_1'),\phi(y_1'x_2)\}$, and $|X\setminus\{x_1,x_2\}|\ge 3 > 2$, so suppose without loss of generality that $\phi(y_2'x_3) \notin \{\phi(x_1y_1'),\phi(y_1'x_2)\}$.
Then $p_2 := x_1,p_1,x_3$ is rainbow.

In all cases, we have defined the rainbow path $p_2=x_1\bc \ldots\bc x_3$ ($p_2$ has length $4$ when $k$ is even, and it has length $3$ when $k$ is odd).
We will now extend $p_2$ to a rainbow cycle of length $k$.

Take any $i \in \{3,\ldots,\ell\}$ and suppose that the rainbow path $p_{i-1} := p_2,y_3,x_4,\ldots,y_{i-1},x_i$ of length $\le 2(i-1)$ has already been defined.
There are at most $2(i-1)$ colours in $p_{i-1}$.
For each colour $c$ present in $p_{i-1}$, there are at most $2$ vertices in $Y \setminus V(p_{i-1})$ whose edges to $\{x_i,x_{{i+1}\, (\mr{mod}\, \ell)}\}$ receive colour $c$; there is at most $1$ such vertex if $c$ is the colour of $y_{i-1}x_i$.
So there are at most $4(i-1)-1 = 4i-5$ vertices $y$ in $Y$ such that $p_{i-1},y,x_{i+1\, (\mr{mod}\, \ell)}$ is not rainbow.

Now, $|Y\setminus V(p_{i-1})|>4i-5$, so there exists $y_i \in (N(x_i)\cap N(x_{i+1\, (\mr{mod}\, \ell)})) \setminus \{y_1,\ldots,y_{i-1}\}$ such that $p_i:= p_{i-1},y,x_{i+1\, (\mr{mod}\, \ell)}$ is rainbow.
Applying this argument iteratively yields a rainbow cycle of length $k$.
Therefore, $G$ contains a rainbow copy of $C_k$ in all proper colourings, so $G \in \ca{F}^\ast(C_k)$.
This completes the proof of Claim~\ref{clm:RF(Ck)_has_rainbow_Ck}.
\end{proof}

Since any independent set $I$ in $G$ with more than one element cannot contain a universal vertex in $X\setminus\{x_2\}$, and $x\in I$ implies $N(x) \cap I = \emptyset$, we have $|I| \le \max\{|Y|,|\{x_2\}\cup (Y\setminus Y')|\} = |Y|$.
So $\alpha(G) = |Y|$, and so $u(G) = |X|-1=\ell-1$.
By Claim \ref{clm:RF(Ck)_has_rainbow_Ck}, $u \le u(G) = \ell-1$.

Consider the graph $C_k$ with cycle $v_1,\ldots,v_k,v_1$.
An independent set in $C_k$ cannot contain two consecutive vertices $v_i,v_{i+1}$, so $\{v_1,v_3,\ldots,v_{\lfloor k/2 \rfloor}\}$ is a maximal independent set by cardinality.
Thus, $\alpha(C_k) = \lfloor k/2 \rfloor$.
Any graph $F$ with $u(F) < \ell-1$ has $\alpha(F) > |V(F)|-\ell$, so letting $I$ be an independent set in $F$ with $|I|=\alpha(F)$, any copy $C$ of $C_k$ in $F$ would have 
\[|V(C)\cap I| = |V(C)\setminus (V(F)\setminus I)| > k-\ell = \left\lfloor \frac{k}{2}\right\rfloor, \]
contradicting $\alpha(C_k) = \lfloor k/2 \rfloor$.
So $F$ is $C_k$-free, and so $F \notin \ca{F}^\ast(C_k)$.
Thus, $u\ge \ell-1$.
Therefore, $u = \ell-1$.

In the graph $G$, $Y$ is an independent set of size $|V(G)|-u-1$ and $x_2 \notin Y$, so $d(\ca{F}^\ast(C_k)) \le |N(x_2)\cap Y| = z$.
Therefore, by Theorem~\ref{thm:weaker_sat_general_upper_bound_gen}, for all $n \ge k (\ge u)$,
\[ \prsat(n,C_k)\le un+\ff{(d-1)(n-u)}{2}-\binom{u+1}{2} \le \begin{cases}
    5n-12, & k=8 \\
    \frac{k-1}{2}n+\frac{1-k^2}{8}, & k\ge 7 \text{ odd} \\
    \lfloor\frac{k-1}{2}n+\frac{4-k^2}{8}\rfloor, & k \ge 10 \text{ even}.
\end{cases} \]
as desired.
\end{proof}

\begin{remark}
    We note that the method used in the proof of Theorem~\ref{thm:Ck_upper_satast} can reprove the bound on $\prsat(n,C_5)$ obtained in~\cite{halfpap2024proper}. However, the bound we achieve for $C_6$ is weaker than their bound. 
    Note in addition that we begin the proof by showing that $d \le z$. In fact, $d = z$, but we omit the proof as we do not require this fact. 
\end{remark}

\subsection{Bipartite Graphs}

\begin{restatable}{theorem}{bipartite} \label{thm:bipartite_general_upper_bound}
Let $H$ be a bipartite graph with bipartition $(X,Y)$, and let $k=|X|$ and $\ell = |Y|$.
Suppose $\ell = \alpha(H)$.
Then for all $n \ge k+\ell$,
\[\prsat(n,H)\le \left(k-1+\frac{(k(k-1)+1)(\ell-1)}{2}\right)n-\frac{(k(k-1)^2+1)(\ell-1)+k(k-1)}{2}.\]
\end{restatable}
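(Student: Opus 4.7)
The approach mirrors the proofs of Theorems~\ref{thm:Kk_RF_bounds} and~\ref{thm:Ck_upper_satast}: I will apply the identity $\prsat(n,H) = \sat(n,\mathcal{F}^\ast(H))$ from~\eqref{obs:relation} together with Theorem~\ref{thm:weaker_sat_general_upper_bound_gen}, which requires estimates on $u := u(\mathcal{F}^\ast(H))$ and $d := d(\mathcal{F}^\ast(H))$.

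To see that $u \ge k-1$, suppose $F$ is a graph with $u(F) \le k-2$. Then $F$ has an independent set $I$ with $|V(F) \setminus I| \le k-1$. Any copy of $H$ in $F$ would meet $I$ in an independent set of $H$, which has size at most $\alpha(H) = \ell$ by hypothesis, forcing at least $k$ of the copy's vertices to lie in $V(F) \setminus I$---a contradiction. Hence $F$ is $H$-free and $F \notin \mathcal{F}^\ast(H)$.

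For the matching upper bound on $u$ and the estimate on $d$, I would take $G := K_k + \overline{K_m}$ with $m := (k(k-1)+1)(\ell-1)+1$. Since $H \subseteq K_{k,\ell}$, it suffices to find a rainbow copy of $K_{k,\ell}$ in every proper colouring $\phi$ of $G$. Write $X := V(K_k) = \{x_1,\ldots,x_k\}$ and $Y := V(\overline{K_m})$, and select $y_1,\ldots,y_\ell \in Y$ greedily so that the $ik$ bipartite edges $\{x_j y_{i'} : j \in [k],\ i' \le i\}$ carry distinct colours after step $i$. At step $i$, the set $C$ of colours appearing on the previously selected bipartite edges satisfies $|C| \le (i-1)k$, and for each $c \in C$ the set $B_c := \{y \in Y : \phi(x_j y) = c \text{ for some } j\}$ has $|B_c| \le k$, because the edges of colour $c$ incident to $X$ form a matching. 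The key observation is that each $c \in C$ was introduced as $\phi(x_{j^\ast} y_{i^\ast})$ for some $(j^\ast,i^\ast)$, and this $y_{i^\ast}$ lies in $B_c \cap \{y_1,\ldots,y_{i-1}\}$. Hence $|B_c \setminus \{y_1,\ldots,y_{i-1}\}| \le k-1$ for each $c \in C$, so the total number of forbidden choices for $y_i$ is at most $(i-1)k(k-1) + (i-1) = (i-1)(k(k-1)+1) < m$, and a valid $y_i$ always exists.

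This shows $G \in \mathcal{F}^\ast(H)$, so $u \le u(G) = k-1$ and thus $u = k-1$. Moreover, $Y$ is an independent set of size $|V(G)| - u - 1 = m$ with $|N(x) \cap Y| = m$ for every $x \in X$, yielding $d \le m = (k(k-1)+1)(\ell-1)+1$. Substituting $u = k-1$ and $d-1 \le (k(k-1)+1)(\ell-1)$ into Theorem~\ref{thm:weaker_sat_general_upper_bound_gen} and expanding yields the claimed inequality after routine algebra. The main obstacle is the greedy counting step: a naive argument attributing $k$ new bad vertices per used colour only gives $m \ge (k^2+1)(\ell-1)+1$, producing a strictly weaker coefficient on $n$. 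The saving comes from the fact that each introducing vertex $y_{i^\ast}$ is automatically a member of its own bad set, which is what reduces the leading factor from $k^2+1$ to $k(k-1)+1$ and matches the coefficient in the theorem.
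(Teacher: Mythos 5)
Your proof is correct and follows essentially the same route as the paper: bound $u(\mathcal{F}^\ast(H))$ from above and below by $k-1$, bound $d(\mathcal{F}^\ast(H))$ by exhibiting a graph in $\mathcal{F}^\ast(H)$ via a greedy embedding of a rainbow $K_{k,\ell}$, then invoke Theorem~\ref{thm:weaker_sat_general_upper_bound_gen}. The only cosmetic differences are that the paper takes $G$ to be the complete bipartite graph $K_{k,m}$ rather than $K_k + \overline{K_m}$ (both give $u(G)=k-1$ and $d\le m$), and the paper obtains the count $k(k-1)(i-1)$ by noting that for each $x\in A$ only colours not already incident with $x$ can collide, whereas you achieve the same saving via the ``introducing vertex'' observation — equivalent bookkeeping for the same greedy step.
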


\begin{proof}
Let $G$ be the complete bipartite graph with parts $A,B$, where $A$ and $B$ are sets with $|A|=k$ and $|B| = k(k-1)(\ell-1)+\ell$.

\begin{claim} \label{clm:bipartite_u(F(H))_upper}
$G \in \ca{F}^\ast(H)$ and $u(\ca{F}^\ast(H)) \le u(G) = k-1 = u(H)$.
\end{claim}
\begin{proof}[Proof of Claim]
The fact $k-1 =u(H)$ follows from the hypothesis that $\ell=\alpha(H)$.

We also have $u(G)=|V(G)|-\alpha(G)-1 = k-1$.
So it suffices to prove that $u(\ca{F}^\ast(H)) \le u(G)$.

Let $G$ be coloured with any proper colouring.
We will iteratively define a sequence of sets $\emptyset = B_0 \subseteq B_1 \subseteq \cdots \subseteq B_{\ell} \subseteq B$ such that for all $i$, $|B_i| = i$ and $G[A\cup B_i]$ is rainbow.
$G[A\cup B_0]$ is rainbow, since it has no edges.

Take any $i \in [\ell]$ and suppose $B_{i-1}\subseteq B$ has already been defined so that $|B_{i-1}|=i-1$ and $G[A\cup B_{i-1}]$ is rainbow.

For each $x \in A$, $G[A\cup B_{i-1}]$ has exactly $(k-1)(i-1)$ distinct colours on edges not incident with $x$.
Thus, for each $x \in A$, there are at most $(k-1)(i-1)$ vertices $y$ in $B\setminus B_{i-1}$ such that $xy$ receives a colour in $G[A\cup B_{i-1}]$.
Thus, there are at most $k(k-1)(i-1)$ vertices $y$ in $B\setminus B_{i-1}$ such that $G[A\cup B_{i-1}\cup \{y\}]$ is not rainbow.
Since 
\[|B\setminus B_{i-1}| \ge k(k-1)(\ell-1)+\ell-(\ell-1)> k(k-1)(\ell-1),\]
there exists a vertex $y_i \in B\setminus B_{i-1}$ such that $G[X\cup B_{i-1}\cup \{y_i\}]$ is rainbow.
Set $B_i := B_{i-1} \cup \{y_i\}$.

By repeating this definition iteratively for all $i \in [\ell]$, we obtain a set $B_{\ell} \subseteq B$ such that $G[A\cup B_{\ell}]$ is a rainbow copy of $K_{k,\ell}$ in $G$.
This contains a rainbow copy of $H$.
Therefore, $G \in \ca{F}^\ast(H)$, so $u(\ca{F}^\ast(H)) \le u(G)$.
\end{proof}

\begin{claim} \label{clm:bipartite_u(F(H))_lower}
$u(\ca{F}^\ast(H)) \ge k-1$.
\end{claim}
\begin{proof}
Let $G'$ be a graph with $u(G')<k-1$, let $I$ be an independent set in $G'$ with $|I| = |V(G')|-u(G')-1$, and let $J := V(G')\setminus I$.
Then $|J|<k$.
If $G'$ contains a copy $F$ of $H$, then $V(F)\cap I$ is an independent set in $F$ with $|V(F)\cap I| = |V(F)\setminus J| > \ell$, contradicting the hypothesis that $\alpha(F) = \ell$. Thus, $G'$ does not contain a copy of $H$.

This proves that any graph $G'$ with $u(G')<k-1$ is $H$-free (and is thus rainbow $H$-free in any proper colouring), so $u(\ca{F}^\ast(H)) \ge k-1$.
\end{proof}
Combining Claims \ref{clm:bipartite_u(F(H))_upper} and \ref{clm:bipartite_u(F(H))_lower}, we obtain $u(\ca{F}^\ast(H)) = u(H)=k-1$.
Now $B$ is an independent set in $G$ with $k(k-1)(\ell-1)+\ell=|V(G)|-u(\ca{F}^\ast(H))-1$ elements, and every vertex $x$ in $V(G)\setminus B = A$ has $|N(x) \cap B| = |B|$, so 
\[d(\ca{F}^\ast(H)) \le |B| = k(k-1)(\ell-1)+\ell.\]
Thus, by Theorem \ref{thm:weaker_sat_general_upper_bound_gen}, for all $n \ge k+\ell \ (> k-1 = u(\ca{F}^\ast(H)))$,
\begin{align*}
    \prsat(n,H) &= \mr{sat}(n,\ca{F}^\ast(H)) \\
    &\le (k-1)n+\left\lfloor\frac{((k(k-1)(\ell-1)+\ell-1)(n-k+1)}{2}\right\rfloor-\binom{k}{2} \\
    &\le \left(k-1+\frac{(k(k-1)+1)(\ell-1)}{2}\right)n-\frac{(k(k-1)^2+1)(\ell-1)+k(k-1)}{2},
\end{align*}
as desired.
\end{proof}

A simple application of Theorem~\ref{thm:bipartite_general_upper_bound} yields our bound on $\prsat(n,K_{s,t})$, restated here for convenience.

\completebipartite*
\begin{proof}
Let $K_{k,\ell}$ be the complete bipartite graph with parts $X,Y$, where $|X|=k$ and $|Y|=\ell$.
Note that any independent set in $K_{k,\ell}$ is contained entirely in either $X$ or $Y$, since otherwise it contains some $x \in X$ and $y\in Y$ and thus contains the edge $xy \in E(K_{k,\ell})$.
So because $X$ and $Y$ are independent sets, $\alpha(K_{k,\ell}) = \max\{k,\ell\}= \ell$.
Therefore, the result follows from Theorem \ref{thm:bipartite_general_upper_bound}.
\end{proof}

\section{Constant proper rainbow saturation number} \label{sec:isolated}

In this section we explore the family of graphs with constant proper rainbow saturation number. We will see via a connection to semi-saturation that any such graph necessarily has an isolated edge.

\subsection{Lower Bounds from Semi-Saturation}
Here we provide the (simple) proof of Theorem~\ref{thm:ssat_general_lower_bound}, restated here for convenience. 

\ssatthm*
\begin{proof}

For (i), let $G$ be a $H$-semi-saturated graph on $n$ vertices.  If a graph $G$ has two isolated vertices $v,u$, then since $H$ has no isolated edge, $G+vu$ does not contain a copy of $H$ that contains the edge $vu$, contradicting $G$ being $H$-semi-saturated.
 So $G$ contains at most one isolated vertex. Therefore, $|E(G)| = \frac{\sum_{v\in V(G)}d(v)}{2} \ge \frac{n-1}{2}$ and $|E(G)|$ is an integer, so $|E(G)| \ge \lfloor \frac{n}{2} \rfloor$ as required for (i).

For (ii), let $G$ be an $H$-semi-saturated graph.
If $G$ has two distinct tree components $T_1,T_2$, then letting $u\in V(T_1)$ and $v\in V(T_2)$, the component of $G+uv$ containing $uv$ is a tree and is thus $H$-free.
This contradicts $G$ being $H$-semi-saturated.
So $G$ has at most one tree component.
Letting $T$ be a component of $G$ such that $|E(T)|-|V(T)|$ is minimal, all components of $G-V(T)$ are cyclic, so we have
\[ |E(G)| = |E(G-V(T))|+|E(T)|\ge n-|V(T)|+(|V(T)|-1)=n-1, \]
as required for (ii).
\end{proof}
\subsection{Graphs with an Isolated Edge}
Theorem \ref{thm:ssat_general_lower_bound} shows that $\prsat(n,H)$ and $\mathrm{ssat}(n,H)$ are linear in $n$ if $H$ has no isolated edge.
For $\ssat(n,H)$, the converse is also true by the following theorem by K\'aszonyi and Tuza. 
\begin{corollary}[K\'aszonyi, Tuza \cite{Kaszonyi1986}] \label{cor:Kaszonyi_sat_constant}
Let $F$ be a graph.
Then $\sat(n,F) = c+ o(1)$ for some constant $c$ if and only if $F$ has an isolated edge.
\end{corollary}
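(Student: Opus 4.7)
The plan is to prove each direction by a different technique. The ``only if'' direction is immediate from Theorem~\ref{thm:ssat_general_lower_bound}: if $F$ has no isolated edge, then every $F$-saturated graph is also $F$-semi-saturated (since any copy of $F$ in $G+e$, for $F$-free $G$, must use $e$), so $\sat(n,F) \ge \ssat(n,F) \ge \lfloor n/2 \rfloor$, which is unbounded and hence cannot equal $c+o(1)$.

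For the ``if'' direction, my approach is an explicit construction. First I would reduce to the case that $F$ has no isolated vertices, by observing that deleting the isolated vertices from $F$ does not affect either side of the equivalence for $n \ge |V(F)|$. Then, writing $F = F_0 \cup K_2$ with the $K_2$ being an isolated edge, I would take
\[ G := K_{|V(F)|-1} \cup \overline{K_{n-|V(F)|+1}}, \]
which has $\binom{|V(F)|-1}{2}$ edges, a constant depending only on $F$. To verify $F$-saturation, first note that $G$ is $F$-free: any copy of $F$ would require all $|V(F)|$ of its vertices to lie in the clique (since $F_0$ has no isolated vertices and every edge of $G$ lies in the clique), which is impossible as the clique has only $|V(F)|-1$ vertices. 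Second, for any non-edge $xy$ of $G$, at least one endpoint is isolated; in either sub-case the clique (or the clique minus $x$, if $x$ lies in the clique) still contains a copy of $F_0$ disjoint from $xy$ (because $K_m$ contains every graph on at most $m$ vertices, and $|V(F_0)| = |V(F)|-2$), so $G+xy \supseteq F_0 \cup K_2 = F$.

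The main obstacle is the bookkeeping when $F$ has isolated vertices: applied naively the construction fails, because the many isolated vertices of $G$ can play the role of isolated vertices of $F$, producing an unintended copy of $F$ in $G$ itself. The initial reduction to $F^* := F - (\text{isolated vertices})$ sidesteps this cleanly, since $F^*$ has an isolated edge iff $F$ does and $\sat(n,F) = \sat(n,F^*)$ for all $n \ge |V(F)|$. With that reduction in place, the remaining verification is short and yields $\sat(n,F) \le \binom{|V(F)|-1}{2}$ for every $n \ge |V(F)|-1$, so $\sat(n,F)$ is bounded; from this, the asymptotic form $c+o(1)$ follows by standard monotonicity arguments for saturation numbers of graphs with an isolated edge.
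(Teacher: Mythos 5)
The paper does not prove this statement itself; it is cited directly from K\'aszonyi and Tuza, so there is no argument in the paper to compare against. Taken on its own merits, your proposal is largely correct but has a real gap at the very end. The ``only if'' direction via semi-saturation is right, and is exactly what Theorem~\ref{thm:ssat_general_lower_bound}(1) supplies: $F$ with no isolated edge gives $\sat(n,F)\ge\ssat(n,F)\ge\lfloor n/2\rfloor$, which is unbounded. The reduction to $F$ without isolated vertices is valid for $n\ge|V(F)|$, and your construction $K_{|V(F)|-1}\cup\overline{K_{\,n-|V(F)|+1}}$ is indeed $F$-saturated for the reasons you give (every vertex of $F$ has degree at least one, so any copy of $F$ would have to sit inside the clique, which is too small; and adding a non-edge $xy$ supplies the isolated $K_2$ while the remaining $|V(F)|-2$ clique vertices supply $F_0$). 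This yields $\sat(n,F)\le\binom{|V(F)|-1}{2}$ for all sufficiently large $n$.

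The gap is the final sentence. Boundedness alone does not give $\sat(n,F)=c+o(1)$: a bounded integer-valued sequence can oscillate forever, and ``standard monotonicity'' is not automatic here, since $\sat(n,\cdot)$ is famously not monotone in $n$ in general. You need to show $\sat(n,F)$ is eventually constant. One way to close this (and essentially what K\'aszonyi and Tuza do): once $n>2\binom{|V(F)|-1}{2}$, any minimum $F$-saturated graph $G_n$ has an isolated vertex $v$. Because $F$ (after your reduction) has no isolated vertices, for any non-edge $e$ of $G_n-v$ no copy of $F$ inside $G_n+e$ can use $v$, since $v$ remains isolated in $G_n+e$. Hence $G_n-v$ is $F$-saturated on $n-1$ vertices, giving $\sat(n-1,F)\le\sat(n,F)$ for all such $n$. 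A bounded, eventually non-increasing sequence of nonnegative integers is eventually constant, which is precisely the $c+o(1)$ conclusion. With that step added, your proof is complete.
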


Because $\mathrm{ssat}(n,H) \le \sat(n,H)$, we obtain the following consequence of Corollary~\ref{cor:Kaszonyi_sat_constant} and Theorem~\ref{thm:ssat_general_lower_bound}.
\begin{proposition}
$\mathrm{ssat}(n,H) = O(1)$ if and only if $H$ has an isolated edge.
\end{proposition}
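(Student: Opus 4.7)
The proposition is essentially immediate from the two ingredients just cited, so my plan is simply to handle the two implications separately and combine them.

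For the ``only if'' direction, I will argue by contrapositive: suppose $H$ has no isolated edge. Then Theorem~\ref{thm:ssat_general_lower_bound}(i) directly gives $\ssat(n,H) \ge \lfloor n/2 \rfloor$, which grows with $n$ and in particular is not $O(1)$. So the contrapositive says: if $\ssat(n,H) = O(1)$, then $H$ must have an isolated edge.

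For the ``if'' direction, suppose $H$ has an isolated edge. By Corollary~\ref{cor:Kaszonyi_sat_constant} (K\'aszonyi--Tuza), we have $\sat(n,H) = c + o(1)$ for some constant $c$, hence $\sat(n,H) = O(1)$. The key observation is that any $H$-saturated graph is automatically $H$-semi-saturated: indeed, if $G$ is $H$-free and $G+e$ contains a copy of $H$, then that copy must use the edge $e$ (else it would already lie in $G$). Thus $\ssat(n,H) \le \sat(n,H) = O(1)$, giving the desired bound.

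Combining the two directions yields the claimed equivalence. There is no real obstacle here; the statement is a direct corollary of Theorem~\ref{thm:ssat_general_lower_bound} and Corollary~\ref{cor:Kaszonyi_sat_constant}, and the only substantive remark to make explicit is the trivial inclusion $\ssat(n,H) \le \sat(n,H)$ coming from the fact that $H$-saturated graphs are $H$-semi-saturated.
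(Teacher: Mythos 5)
Your proof is correct and matches the paper's argument exactly: the paper also derives this as an immediate consequence of $\ssat(n,H) \le \sat(n,H)$ together with Corollary~\ref{cor:Kaszonyi_sat_constant} for one direction and Theorem~\ref{thm:ssat_general_lower_bound}(i) for the other.
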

Is it natural to wonder whether $\prsat(n,H) = O(1)$ whenever $H$ has an isolated edge. We show that this is indeed the case for the graphs $K_3 \cup K_2$, $K_{1,k} \cup K_2$, and $mK_2$ for $k,m \in \mathbb{N}$.

\begin{restatable}{theorem}{starwithedge} \label{thm:star_and_K2}
For all $k \ge 1$ and $n \ge 2\cf{k}{2}+2$, $\prsat(n,K_{1,k}\cup K_2)\le \binom{2\cf{k}{2}+2}{2}$.
\end{restatable}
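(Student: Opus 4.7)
\emph{The plan.} We propose to show that the graph $G$ consisting of a clique on a vertex set $V_0$ of size $2m+2$ (where $m = \lceil k/2 \rceil$) together with $n-(2m+2)$ isolated vertices, which we collect into a set $V_1$, is properly rainbow $(K_{1,k}\cup K_2)$-saturated. This yields the desired bound since $|E(G)|=\binom{2m+2}{2}$. So we must verify conditions (1) and (2) of proper rainbow saturation for this $G$.

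\emph{Condition (1).} We split by parity of $k$. If $k$ is even, then $|V(K_{1,k}\cup K_2)|=k+3>k+2=|V_0|$, so $G$ is already $(K_{1,k}\cup K_2)$-free and any proper colouring suffices. If $k$ is odd, then $|V_0|=k+3$ is even, so $K_{2m+2}$ admits a $1$-factorization $\phi$ into $k+2$ perfect matchings, which we extend arbitrarily to $G$. We claim $\phi$ admits no rainbow $K_{1,k}\cup K_2$: given a potential copy with star centre $v$, leaves $X$, and disjoint edge $uw$, let $c=\phi(uw)$ and let $vz$ be the unique edge at $v$ of colour $c$ (it exists because each colour class is a perfect matching of $V_0$). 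Properness of $\phi$ forces $z\notin\{u,w\}$, hence $z\in X$, so the star edge $vz$ and the edge $uw$ share colour $c$, killing rainbowness.

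\emph{Condition (2).} Fix a non-edge $e=xy\notin E(G)$ and any proper colouring $\phi$ of $G+e$, and set $c_0:=\phi(e)$. Our strategy in every case is to use $e$ itself as the $K_2$ and to find a star $K_{1,k}$ inside the clique, disjoint from $\{x,y\}$, whose $k$ edges all avoid colour $c_0$; the star is then automatically rainbow by properness at its centre, giving the required rainbow $K_{1,k}\cup K_2$. If $x,y\in V_1$, pick any $v\in V_0$ as star centre; at most one edge at $v$ inside $K_{2m+2}$ has colour $c_0$, so the other $\geq 2m\geq k$ neighbours supply valid leaves. If $x\in V_0$ and $y\in V_1$, we instead work inside $K_{2m+1}:=K_{2m+2}-x$: the colour-$c_0$ edges in this subclique form a matching of size at most $m$, covering at most $2m$ of its $2m+1$ vertices, so some vertex $w$ is left uncovered. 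Then no edge at $w$ inside $K_{2m+1}$ has colour $c_0$, and the $2m\geq k$ neighbours of $w$ in $K_{2m+1}$ supply the desired leaves.

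\emph{The main obstacle.} We expect the delicate step to be the $1$-factorization argument in the odd-$k$ case of condition (1): the key is that every colour class is a perfect matching, hence meets every vertex of $V_0$, forcing the colour of $uw$ to reappear at $v$ on some edge $vz$, and properness then pushes $z$ into $X$. The remaining arguments are elementary, resting on the observation that any colour class in a proper edge-colouring is a matching, and so cannot saturate a clique of odd order.
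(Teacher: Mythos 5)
Your proposal is correct and takes essentially the same approach as the paper: same construction $G=K_{2\lceil k/2\rceil+2}\cup\overline{K_{n-(2\lceil k/2\rceil+2)}}$, same $1$-factorization argument for the non-rainbow colouring (every colour class is a perfect matching, so the colour on the isolated edge reappears at the star centre on a leaf-edge), and same non-perfect-matching-in-an-odd-clique argument for producing the rainbow star after adding a non-edge. The only cosmetic difference is that you split condition~(1) by the parity of $k$ (handling the even case by a vertex count) whereas the paper handles both parities uniformly by proving rainbow $K_{1,\ell-3}\cup K_2$-freeness and using the containment $K_{1,\ell-3}\cup K_2\subseteq K_{1,k}\cup K_2\subseteq K_{1,\ell-2}\cup K_2$.
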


\begin{proof}
Let $\ell:= 2\cf{k}{2}+2$, and note that $k \in \{\ell-3,\ell-2\}$.
Let $H \cong K_\ell$ and $G = H \cup \overline{K_{n-\ell}}$.
\begin{claim}
    $G$ has a rainbow $K_{1,\ell-3}\cup K_2$-free proper colouring.
\end{claim}
\begin{proof}[Proof of Claim]
Let $\phi$ be a proper $(\ell-1)$-colouring of $H$, and note that every colour class in $\phi$ is a perfect matching.
Take any copy $F$ of $K_{1,\ell-3}\cup K_2$ in $H$.
Let $u$ be a vertex in $F$ of maximum degree, and let $vw$ be the isolated edge in $F$ that does not contain $u$.
Since $|V(F)| = |V(H)|$, the component of $F$ containing $u$ contains every edge incident with $u$ except $uv$ and $uw$.
Since every colour class in $\phi$ is a perfect matching, there exists an edge $e \in E(F)$ incident with $u$ in colour $\phi(vw)$, so $F$ is not rainbow.
Thus, $\phi$ is rainbow $K_{1,\ell-3}\cup K_2$-free.
\end{proof}

\begin{claim}
For any $vu \in E(\overline{G})$, $G+vu$ contains a rainbow copy of $K_{1,\ell-2}\cup K_2$ in any proper colouring.
\end{claim}
\begin{proof}[Proof of Claim]
Let $vu \in E(\overline{G})$ such that $v \notin V(H)$, and let $\phi$ be a proper colouring of $G+vu$.
Let $A$ be an $(\ell-1)$-subset of $V(H)\setminus\{u\}$, let $H' = H[A]$, and let $E' = \{e \in E(H'): \phi(e) = \phi(vu)\}$.
$E'$ is a matching, and $|V(H')| = \ell-1$, which is odd, so $E'$ is a non-perfect matching.
So there exists $x \in V(H')$ such that $\{xy:y \in V(H') \setminus \{x\}\} \cap E' = \emptyset$.
Let $S$ be the spanning star subgraph of $H'$ with universal vertex $x$, and note that $S$ is a rainbow copy of $K_{1,\ell-2}$ and contains no edge of colour $\phi(vu)$.
So the graph $F$ with $V(F) = V(S) \cup \{v,u\}$ and $E(F) = E(S) \cup \{vu\}$ is a rainbow copy of $K_{1,\ell-2} \cup K_2$ in $G$.
\end{proof}

By the above two claims, since $K_{1,\ell-3}\cup K_2 \subset K_{1,k}\cup K_2 \subset K_{1,\ell-2} \cup K_2$, $G$ is properly rainbow $K_{1,k}\cup K_2$-saturated.
Therefore, 
\[\prsat(n,K_{1,k}\cup K_2) \le |E(G)| = \binom{\ell}{2},\]
as desired.
\end{proof}

\begin{restatable}{theorem}{trianglewithedge} \label{thm:K3UK2}
For all $n \ge 6$, $\prsat(n,K_3 \cup K_2) \le 15$, and the graph $K_6 \cup \overline{K_{n-6}}$ is properly rainbow $K_3 \cup K_2$-saturated.
\end{restatable}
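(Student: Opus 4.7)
The plan is to verify the two defining properties of proper rainbow $K_3 \cup K_2$-saturation directly for $G := K_6 \cup \overline{K_{n-6}}$, which has exactly $\binom{6}{2}=15$ edges, so that $\prsat(n,K_3\cup K_2) \le 15$ follows immediately.

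For the existence of a rainbow $K_3\cup K_2$-free proper colouring of $G$, I would use the symmetric $1$-factorization of $K_6$: label its vertex set $\{1,2,3,4,5,\infty\}$ and, for each $i\in\{1,\ldots,5\}$, let colour class $i$ be the perfect matching $\{\{i,\infty\},\{i-1,i+1\},\{i-2,i+2\}\}$ (indices mod $5$). A direct inspection of the $10$ complementary pairs of triangles $\{a,b,c\}\,|\,V(K_6)\setminus\{a,b,c\}$ reveals that each such pair uses exactly the same $3$-set of colours. Any rainbow $K_3\cup K_2$ in $K_6$ would consist of a rainbow triangle $T$ on three vertices together with a disjoint edge on two of the remaining three; but this disjoint edge is necessarily an edge of the complementary triangle of $T$, whose three colours already appear on $T$, contradicting rainbowness. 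The colouring extends trivially to $G$, since $G$ has no edges leaving $V(K_6)$.

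For the saturation condition, take any non-edge $e = uv$ of $G$ and any proper colouring $\phi$ of $G+e$. Set $c:=\phi(uv)$ and let $M$ be the colour-$c$ matching in $K_6$ induced by $\phi$. The uniform strategy is to find a triangle $T$ in $K_6$ whose three vertices are disjoint from $\{u,v\}$ and which avoids every edge of $M$; then $T$ is rainbow by properness, uses only colours distinct from $c$, and $T\cup\{uv\}$ is the desired rainbow $K_3\cup K_2$ in $G+e$. The case $u,v\in V(K_6)$ cannot occur, since all such edges already lie in $G$. If $u,v\notin V(K_6)$, then $|M|\le 3$; when $M$ is a perfect matching of $V(K_6)$, one obtains $2^3=8$ valid triangles by selecting one vertex from each matched pair, and smaller $|M|$ only makes things easier. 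If $u\in V(K_6)$ and $v\notin V(K_6)$, then properness of $\phi$ at $u$ forces every colour-$c$ edge of $K_6$ to avoid $u$, so $M$ lies in the $5$-vertex set $V(K_6)\setminus\{u\}$ with $|M|\le 2$, and a short count via inclusion-exclusion on the $\binom{5}{3}=10$ triangles of $V(K_6)\setminus\{u\}$ produces the required triangle in every subcase.

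The main obstacle is verifying the complementary-triangle property of the chosen $1$-factorization underpinning condition (1); once that is in hand, everything else reduces to the brief matching-avoidance count sketched above.
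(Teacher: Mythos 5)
Your proof is correct, and it differs from the paper's in a structurally interesting way. The paper proves a full characterization (their Claim~\ref{clm:K3UK2_no_rainbow}): a proper colouring of $G$ is rainbow $K_3\cup K_2$-free if and only if its restriction to $K_6$ is a proper $5$-colouring, i.e.\ every colour class is a perfect matching. They then use this equivalence in both directions --- the ``if'' direction gives the rainbow-free colouring (via the same complementary-triangle observation you make), and the ``only if'' direction is invoked in a proof by contradiction to handle the case where the new edge $uv$ has one endpoint in $K_6$: under the contradiction hypothesis every colour incident to that endpoint is exhausted, so $\phi(uv)$ is entirely absent from $K_6$ and any triangle in $K_6-\{u\}$ works. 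You instead verify condition (1) by inspection of one explicit $1$-factorization, and for condition (2) you go directly after a triangle avoiding the colour-$\phi(uv)$ matching, using a small inclusion--exclusion count in the harder case ($u\in V(K_6)$, so $M$ lives in $K_5$ with $|M|\le 2$, giving at least $10-2\cdot 3=4$ usable triangles). Your route is slightly more elementary: it never needs to characterize all rainbow-free colourings and it replaces a contradiction argument with a direct count; the paper's route is a bit more structured and its characterization lemma does double duty across both halves of the proof. Both are complete; I'd only flag that ``direct inspection of the 10 complementary pairs'' is a slightly understated way to state what is really the general fact (true for every proper $5$-colouring of $K_6$, not just your chosen $1$-factorization) that a perfect matching of $K_6$ splits one edge into $T$ and the matched complementary edge into $\overline{T}$, so $T$ and $\overline{T}$ always carry identical colour triples --- writing it that way would save you the $10$-case check.
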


\begin{proof}
Note that $K_2$ and $K_3$ are rainbow in any proper colouring, so a copy of $K_3 \cup K_2$ is rainbow if and only if the colour of the $K_2$ does not also colour an edge in the $K_3$.

Let $H \cong K_6$ and $G = H \cup \overline{K_{n-6}}$.

\begin{claim} \label{clm:K3UK2_no_rainbow}
Let $\phi$ be a proper colouring of $G$, and for each colour $c$, let $E_c = \{e\in E(G): \phi(e) = c\}$.
Then the following are equivalent:
\begin{enumerate}
    \item $\phi$ is a $5$-colouring.
    \item For any colour $c$, $E_c$ is a perfect matching of $H$.
    \item $\phi$ is rainbow $K_3\cup K_2$-free.
\end{enumerate}
\end{claim}
\begin{proof}[Proof of Claim]
$(1) \iff (2)$:
Clear.

$(2) \implies (3)$:
Suppose that $E_c$ is a perfect matching of $H$ for every colour $c$.
Let $F$ be a copy of $K_3$ in $G$, and let $J := G[V(H) \setminus V(F)]$.
For each $c \in \phi(E(F))$, $E_c \cap E(J)\neq \emptyset$, and $J \cong K_3$, so $\phi(E(F)) = \phi(E(J))$.
Thus, for any edge $e \in E(G)$ which is vertex-disjoint from $F$, we have $|\phi(E(F)\cup \{e\})| = |\phi(E(F))| = 3$.
So $\phi$ is rainbow $K_3 \cup K_2$-free.

$(3) \implies (2)$:
To prove the contrapositive, suppose that there exists a colour $c$ such that $E_c$ is not a perfect matching of $H$, i.e., $1 \le |E_c| < 3$.
Then $E_c$ is a matching on at most $2$ edges and at most $4$ vertices, so there exist $u,v \in V(H)$ such that $uN(u) \cap E_c = vN(v) \cap E_c = \emptyset$.
Let $xy \in E_c$ and $w \in V(H) \setminus \{v,u,x,y\}$.
Then since no edge in $G[\{v,u,w\}]$ receives colour $c$ and $\phi(xy) = c$, $G[\{v,u,w\}] \cup G[\{x,y\}]$ is a rainbow copy of $K_3 \cup K_2$ in $G$, as desired.
\end{proof}

\begin{claim} \label{clm:K3UK2_has_rainbow}
For any $e \in E(\overline{G})$, $G+e$ contains a rainbow copy of $K_3 \cup K_2$ in any proper colouring.
\end{claim}
\begin{proof}[Proof of Claim]
Since $H$ has no non-edges, $e = uv$ for some $u, v \in V(G)$ with $u \notin V(H)$.

Suppose for contradiction that there exists a rainbow $K_3\cup K_2$-free proper colouring $\phi$ of $G+e$.
Let $C = \phi(E(G))$ and $c' = \phi(uv)$.

\textbf{Case 1:}
$v \in V(H)$. We have that $\phi|_{E(G)}$ is a proper $5$-colouring of $H$ by Claim \ref{clm:K3UK2_no_rainbow}, and $d_H(v)=5$.
So for all $c \in C$, there exists an edge incident with $v$ of colour $c$, so $c' \notin C$.
So for any copy $F$ of $K_3$ in $H-v$, $F \cup G[\{u,v\}]$ is a rainbow copy of $K_3 \cup K_2$ in $G+uv$, a contradiction.

\textbf{Case 2:}
$v \notin V(H)$.
Since $E_{c'} \cap E(H)$ is a matching on at most $3$ edges, let $M =\{v_1v_2,v_3v_4,v_5v_6\}$ be a matching in $H$ such that $E_{c'}\cap E(H) \subseteq M$.
Then the subgraph $F = G[\{v_1,v_3,v_5\}]$ of $G+uv$ does not contain an edge of colour $c'$, so $F\cup G[\{u,v\}]$ is a rainbow copy of $K_3 \cup K_2$ in $G+uv$, a contradiction.

Both cases lead to a contradiction.
Therefore, $G+e$ contains a rainbow copy of $K_3 \cup K_2$ in any proper colouring.
This completes the proof of Claim~\ref{clm:K3UK2_has_rainbow}.
\end{proof}
Note that a proper $5$-colouring of $G$ exists.
So by Claim \ref{clm:K3UK2_no_rainbow}, a rainbow $K_3 \cup K_2$-free proper colouring of $G$ exists.
Thus, together with Claim \ref{clm:K3UK2_has_rainbow}, this implies that $G$ is properly rainbow $K_3\cup K_2$-saturated.
Therefore, $\prsat(n,K_3\cup K_2) \le |E(G)| = \binom{6}{2} = 15$.
\end{proof}

\begin{restatable}{theorem}{matching} \label{thm:mK2}
For all $m \ge 3$ and $n \ge m^2$, $\prsat(n,mK_2) \le 3\binom{m}{2}$.
\end{restatable}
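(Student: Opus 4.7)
The plan is to exhibit a properly rainbow $mK_2$-saturated graph $G$ on $n$ vertices with at most $3\binom{m}{2}$ edges. Following the strategy of Theorems~\ref{thm:star_and_K2} and~\ref{thm:K3UK2}, I would take $G = H \cup \overline{K_{n-|V(H)|}}$, where $H$ is a carefully chosen graph on at most $m^2$ vertices with exactly $3\binom{m}{2}$ edges; since $n \ge m^2$, this leaves plenty of room for isolated vertices. Given the factor of 3 in the edge count and the $m^2$ vertex budget, a natural candidate for $H$ is a graph built from three blocks, each contributing $\binom{m}{2}$ edges (for example, three copies of $K_m$, either vertex-disjoint or suitably glued at a common vertex, or another graph of similar size with these edge and matching parameters).

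To verify that $G$ is properly rainbow $mK_2$-saturated, I would establish two properties:
\begin{enumerate}
    \item[(1)] \emph{Existence of a bad colouring}: I would exhibit an explicit proper colouring $\phi$ of $H$ in which every copy of $mK_2$ contains two edges of the same colour. The strategy is to pick the colouring so that each ``block'' of $H$ is coloured with the same palette of size less than $m$, or with a designated ``gluing'' colour appearing on many disjoint edges; then any $m$-matching in $H$, being forced to draw from these blocks, must use a repeated colour by pigeonhole.
    \item[(2)] \emph{Forced rainbow after edge addition}: For every non-edge $e$ of $G$, I would show that every proper edge colouring of $G+e$ contains a rainbow $mK_2$. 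I would split into three cases depending on whether $e$ joins two isolated vertices, an isolated vertex with $V(H)$, or two non-adjacent vertices inside $V(H)$. In each case, the rainbow $mK_2$ is built by taking the new edge $e$ (of some colour $c$) together with a rainbow $(m-1)$-matching in $H$ whose edges have pairwise distinct colours, all different from $c$.
\end{enumerate}

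The technical heart of the proof is the statement underlying (2): in every proper colouring of $H$ and for every colour $c$, the graph $H$ admits a rainbow $(m-1)$-matching avoiding colour $c$. This ``colour-avoidance flexibility'' is what turns the addition of any edge $e$ into a forced rainbow $m$-matching. I would prove it by exploiting the symmetric three-block structure of $H$: each block supplies a controlled number of rainbow-friendly edges, and combining contributions across the three blocks yields the required $(m-1)$-matching regardless of which single colour must be avoided.

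The main obstacle is reconciling (1) and (2), which pull in opposite directions: (1) asks for a proper colouring restricting colour diversity on matchings, whereas (2) demands that every proper colouring provide abundant rainbow $(m-1)$-matchings. The construction must be chosen so that $\chi'(H) \ge m$ (so that proper colourings of $G+e$ between isolated vertices cannot be merely $(m-1)$-coloured with no rainbow $mK_2$), while still admitting the specific colouring in (1). Carefully handling the case $e \subseteq V(H)$, where $e$ might lie inside a block and reduce available matchings, is where the choice of $H$ and the colour-avoidance lemma interact most delicately; the vertex bound $n \ge m^2$ ensures enough isolated vertices so that edges between isolated vertices remain the dominant case.
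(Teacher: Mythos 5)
Your outline — build a small host graph $H$, append isolated vertices to pad to $n$ vertices, then verify (1) a bad colouring exists and (2) every extension by a non-edge forces a rainbow $m$-matching — is the right framework and matches the structure of the paper's proof. However, you never actually pin down $H$, and the concrete candidates you float do not work. The paper's $H$ is not three blocks of $K_m$: it is a single $K_m$ on a vertex set $X=\{x_1,\dots,x_m\}$ together with $m$ pairwise disjoint pendant stars, where each $x_i$ is joined to its own $(m-1)$-set $Y_i$ of new leaves; this gives $\binom{m}{2}+m(m-1)=3\binom{m}{2}$ edges on $m^2$ vertices. Crucially, $Y=\bigcup_i Y_i$ is independent and every $x_i$ has high degree, so \emph{every} $m$-matching in $H$ must lie entirely in the star edges $E_2$ (any edge inside $X$ uses up two of the $m$ centres and leaves only $m-2$ for the remaining $m-1$ matching edges). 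The bad colouring is then trivial: give $x_iy_{i,j}$ colour $j$, so $E_2$ sees only $m-1$ colours and no $m$-matching is rainbow. Your proposed ``three disjoint $K_m$'s'' does not have this rigidity: for odd $m$ (already $m=3$), one can properly colour each $K_m$ with the same $m$ colours and then pick one edge per copy of distinct colours to obtain a rainbow $mK_2$, so property (1) fails for that $H$.

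There is a second gap in the argument for (2). The ``colour-avoidance'' lemma as you state it — a rainbow $(m-1)$-matching avoiding any prescribed colour $c$ exists in every proper colouring of $H$ — ignores that the new edge $e$ may itself use vertices of $H$ (there are many non-edges with both endpoints in $V(H)$, e.g.\ between $x_i$ and $Y_j$, $i\neq j$, or within $Y$), so the $(m-1)$-matching must also avoid $e$'s vertices. The paper handles this by first proving a structural constraint on bad colourings (if $\phi$ has no rainbow $mK_2$, then $|\phi(E_2)|=m-1$, i.e.\ all $m$ stars use the same palette of $m-1$ colours, forcing $K_m$'s colours to be disjoint from $\phi(E_2)$), and then exhibits the rainbow $m$-matching greedily, splitting on whether $\phi(e)\in\phi(E_2)$ and choosing star edges $e_i\in x_iY_i$ that avoid both the colours and the vertices of $e$. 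Your sketch does not identify this structural lemma about bad colourings, which is what makes the greedy extension go through. As written, the proposal has the right high-level plan but is missing both the correct construction and the key structural claim, so it does not constitute a proof.
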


\begin{proof}
\begin{figure}
    \centering
    \includegraphics{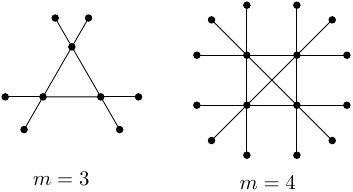}
    \caption{Graph $H$ from Theorem \ref{thm:mK2} in the cases $m=3$ (left) and $m=4$ (right). The graph $G = H \cup \overline{K_{n-m^2}}$ is properly rainbow $mK_2$-saturated.}
    \label{fig:mK2}
\end{figure}
Let $X:=\{x_1,\ldots,x_m\}$, and for each $i \in [m]$, let $Y_i:=\{y_{i,1},\ldots,y_{i,m-1}\}$, where all $x_i$ and $y_{i,j}$ are pairwise distinct.
Let $Y:=\bigcup_{i=1}^m Y_i$, let $E_1:= X^{(2)}$, and let $E_2 := \bigcup_{i=1}^m x_i Y_i$.
Let $H$ be the graph with $V(H)=X\cup Y$ and $E(H) = E_1 \cup E_2$, and let $G:= H \cup \overline{K_{n-m^2}}$.
\begin{claim} \label{clm:mK2_free}
There exists a rainbow $mK_2$-free proper colouring of $G$.
\end{claim}
\begin{proof}[Proof of Claim]
Let $\phi$ be a proper colouring of $G$ with $\phi(x_iy_{i,j})=j$ for all $(i,j) \in [m]\times[m-1]$.
Since $Y$ is an independent set in $G$, every $m$-edge matching in $G$ is in $E_2$, which only receives $m-1$ colours.
So $\phi$ is rainbow $mK_2$-free.
\end{proof}
\begin{claim} \label{clm:mK2_colouring}
Let $\phi$ be a proper colouring of $G$.
If $\phi$ is rainbow $mK_2$-free, then $|\phi(E_2)|=m-1$.
\end{claim}
\begin{proof}[Proof of Claim]
To prove the contrapositive, suppose $\phi$ is a proper colouring of $G$ with $|\phi(E_2)|\ge m$.
Let $c \in \phi(E_2)$ such that the colour class $E_c$ of $c$ has $|E_c\cap E_2|<m$.
Let $k,\ell \in [m]$ so that $E_c \cap x_kY_k\neq \emptyset = E_c \cap x_\ell Y_\ell$; without loss of generality, $k=1$ and $\ell=m$.
Let $e_1 \in E_c \cap x_1Y_1$.
For each $i \in [2,m-1]$, let $e_i \in x_iY_i$ such that $\phi(e_i) \notin \phi(\{e_1,\ldots,e_{i-1}\})$.
Let $e_m \in x_mY_m$ such that $\phi(e_m) \notin \phi(\{e_2,\ldots,e_{m-1}\})$.
Then $\{e_1,\ldots,e_m\}$ is an $m$-edge rainbow matching, so $\phi$ is not rainbow $mK_2$-free, proving the claim.
\end{proof}
\begin{claim} \label{clm:has_mK2}
For all $e \in E(\overline{G})$, $G+e$ contains a rainbow copy of $mK_2$ in any proper colouring.
\end{claim}
\begin{proof}[Proof of Claim]
Let $e=e_1 \in E(\overline{G})$, and suppose there exists a rainbow $mK_2$-free proper colouring $\phi$ of $G+e$.
By Claim~\ref{clm:mK2_colouring}, $|\phi(E_2)|=m-1$.
Since $e \not\subseteq X$, suppose without loss of generality that $e \cap X \subseteq \{x_1\}$ and $|e\cap (\{x_1\}\cup Y_1)| \ge |e \cap Y_2| \ge |e\cap Y_3|=\cdots = |e\cap Y_m|=0$.

\textbf{Case 1:}
$\phi(e)\notin \phi(E_2)$.
Let $e_2 \in x_2(Y_2\setminus e)$.
For all $i \in [3,m]$, let $e_i \in x_iY_i$ such that $\phi(e_i)\notin \phi(\{e_1,\ldots,e_{i-1}\})$.
Then $\{e_1,\ldots,e_m\}$ is a rainbow $m$-edge matching.

\textbf{Case 2:}
$\phi(e) \in \phi(E_2)$.
Note that $e\cap X = \emptyset$.
For all $i \in [3,m]$, let $e_i \in x_i Y_i$ such that $\phi(e_i)\notin \phi(\{e,e_3,e_4,\ldots,e_{i-1}\})$.
Observe that $\phi(\{e,e_3,e_4,\ldots,e_m\}) \subseteq \phi(E_2)$ and $\phi(x_1x_2) \notin \phi(x_1Y_1) = \phi(E_2)$.
Thus, $\{e,x_1x_2,e_3,e_4,\ldots,e_m\}$ is a rainbow $m$-edge matching.

In both cases, $G+e$ contains a rainbow copy of $mK_2$, which is a contradiction.
This completes the proof of Claim~\ref{clm:has_mK2}.
\end{proof}
By Claims~\ref{clm:mK2_free} and~\ref{clm:has_mK2}, $G$ is properly rainbow $mK_2$-saturated, so $\prsat(n,mK_2)\le |E(G)| = 3\binom{m}{2}$, as desired.
\end{proof}

\section{Conclusion}\label{sec:conc}

In Section~\ref{sec:rel} we provide new upper bounds on $\prsat(n,H)$, when $H$ is a clique, a cycle, or a complete bipartite graph.
The method used essentially bounds $\prsat(n,H)$ by the number of edges in a properly rainbow $H$-saturated graph with as many universal vertices as possible, so it is not necessarily close to the true value of $\prsat(n,H)$.

\begin{problem}
    Improve the bounds on $\prsat(n,H)$ when $H$ is a clique, cycle or complete bipartite graph.
\end{problem}

\subsection{Minimal elements of $\ca{F}^\ast(H)$}
In Section~\ref{sec:rel} we exploit a relation between $\prsat(n,H)$ and $\sat(n,\mathcal{F}^*(H))$, where $\mathcal{F}^*(H)$ is the family of graphs that contain a rainbow copy of $H$ in all proper colourings. Given this relationship, it is interesting to gain a better understanding of such families. 

For a family of (unlabelled) graphs $\ca{F}$, say a graph $F$ is a minimal element of $\ca{F}$ (by graph containment) if $F \in \ca{F}$ and no proper subgraph of $F$ is in $\ca{F}$.
\begin{observation}
Let $\ca{F}$ be a set of graphs, and let $\ca{M}$ the set of minimal elements of $\ca{F}$.
Then a graph is $\ca{F}$-saturated if and only if it is $\ca{M}$-saturated.
In particular, for all $n \in \bb{N}$, $\sat(n,\ca{F}) = \sat(n,\ca{M})$.
\end{observation}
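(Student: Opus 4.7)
The plan is to prove the observation directly from the definitions by showing that the two saturation conditions (containing no $F \in \mathcal{F}$, and every edge-addition producing one) are unchanged when we restrict to $\mathcal{M}$.

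The key auxiliary fact I would isolate first is the following: for any $F \in \mathcal{F}$, there exists $F' \in \mathcal{M}$ with $F' \subseteq F$. Since $F$ is finite, the set of subgraphs of $F$ that belong to $\mathcal{F}$ is finite and nonempty (it contains $F$), so it has a minimal element under graph containment; by definition this minimal element lies in $\mathcal{M}$. Consequently, for any graph $G$, $G$ contains a copy of some $F \in \mathcal{F}$ if and only if $G$ contains a copy of some $F' \in \mathcal{M}$: the ``if'' direction is immediate since $\mathcal{M} \subseteq \mathcal{F}$, and the ``only if'' direction follows by taking the minimal $F' \subseteq F$ given above and noting that a copy of $F$ in $G$ restricts to a copy of $F'$ in $G$.

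From this equivalence, both clauses of the definition of saturation transfer directly. First, $G$ is $\mathcal{F}$-free if and only if $G$ is $\mathcal{M}$-free. Second, for any $e \in E(\overline{G})$, the graph $G+e$ contains a copy of some $F \in \mathcal{F}$ if and only if it contains a copy of some $F' \in \mathcal{M}$. Combining these, $G$ is $\mathcal{F}$-saturated if and only if $G$ is $\mathcal{M}$-saturated, and taking the minimum number of edges over all such graphs on $n$ vertices yields $\sat(n,\mathcal{F}) = \sat(n,\mathcal{M})$.

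There is no real obstacle here; the only subtlety worth flagging is the existence of minimal elements, which is a finiteness argument on subgraphs of a given finite $F$. Everything else is a direct unpacking of definitions, so the writeup should be quite short.
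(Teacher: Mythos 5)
Your proposal is correct and is exactly the direct definition-unpacking argument the paper has in mind; the paper states this as an unproved observation, so your writeup supplies the expected details (in particular the finiteness argument guaranteeing that every $F \in \mathcal{F}$ contains some minimal element of $\mathcal{F}$, from which both clauses of the saturation definition transfer).
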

Let $\ca{M}^\ast(H)$ be the set of minimal elements of $\ca{F}^\ast(H)$.
Note that each graph in $\ca{F}^\ast(H)$ contains at least one graph in $\ca{M}^\ast(H)$.
Conversely, if $A\subseteq \ca{M}^\ast(H)$ and each graph in $\ca{F}^\ast(H)$ contains at least one graph in $A$, then $A=\ca{M}^\ast(H)$.
\begin{observation}
Let $G,H$ be graphs.
The following are equivalent:
\begin{itemize}
    \item $G$ is properly rainbow $H$-saturated;
    \item $G$ is $\ca{F}^\ast(H)$-saturated;
    \item $G$ is $\ca{M}^\ast(H)$-saturated.
\end{itemize}
In particular, for all $n \in \bb{N}$, $\prsat(n,H) = \sat(n,\ca{F}^*(H))= \sat(n,\ca{M}^*(H))$.
\end{observation}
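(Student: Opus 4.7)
The plan is to establish the two claimed equivalences in sequence, with the main work going into the first one. The equivalence between $G$ being properly rainbow $H$-saturated and $G$ being $\ca{F}^\ast(H)$-saturated will rest on the key auxiliary claim that for any graph $G$,
\[ G \text{ contains a subgraph isomorphic to some } F \in \ca{F}^\ast(H) \iff G \in \ca{F}^\ast(H). \]
The forward direction is the only substantive point: if $F \subseteq G$ and $F \in \ca{F}^\ast(H)$, then any proper edge-colouring $\phi$ of $G$ restricts to a proper edge-colouring $\phi|_{E(F)}$ of $F$, which by definition of $\ca{F}^\ast(H)$ must contain a rainbow copy of $H$; this copy lives inside $G$ as well. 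The reverse direction is immediate since $G$ is a subgraph of itself. Applied to $G$ itself, the claim gives that $G$ is $\ca{F}^\ast(H)$-free if and only if $G \notin \ca{F}^\ast(H)$, which matches the condition that some proper colouring of $G$ is rainbow $H$-free. Applied to $G+e$ for each non-edge $e$, it gives that $G+e$ contains some element of $\ca{F}^\ast(H)$ exactly when every proper colouring of $G+e$ contains a rainbow $H$. These are precisely the two halves of the definition of being properly rainbow $H$-saturated, so the first equivalence follows.

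The equivalence between being $\ca{F}^\ast(H)$-saturated and being $\ca{M}^\ast(H)$-saturated is a direct consequence of the preceding observation in the paper: for any family $\ca{F}$ with set of minimal elements $\ca{M}$ under graph containment, a graph is $\ca{F}$-saturated iff it is $\ca{M}$-saturated. Specializing to $\ca{F} = \ca{F}^\ast(H)$ and $\ca{M} = \ca{M}^\ast(H)$ completes this step without further work.

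Finally, the chain of equalities $\prsat(n,H) = \sat(n,\ca{F}^*(H)) = \sat(n,\ca{M}^*(H))$ is obtained by taking the minimum number of edges over the three classes of $n$-vertex saturated graphs, which coincide by the equivalences just established. Since the whole argument is essentially a transparent unpacking of definitions together with the previous observation, I do not anticipate a serious obstacle; the only subtlety is being careful that ``restriction of a proper colouring of $G$ to a subgraph $F$ is a proper colouring of $F$'' is invoked cleanly, so that containment of a subgraph in $\ca{F}^\ast(H)$ really does force $G$ itself to lie in $\ca{F}^\ast(H)$.
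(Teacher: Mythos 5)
Your proposal is correct and matches the reasoning the paper leaves implicit (the paper presents this as an observation, and its content is essentially \eqref{obs:relation} combined with the preceding observation on minimal elements). The one fact you rightly make explicit — that $\ca{F}^\ast(H)$ is upward-closed under subgraph containment, because a proper colouring of $G$ restricts to a proper colouring of any subgraph $F$ — is exactly the ingredient that turns ``$G$ contains some $F \in \ca{F}^\ast(H)$'' into ``$G \in \ca{F}^\ast(H)$'' and thereby identifies $\ca{F}^\ast(H)$-saturation with proper rainbow $H$-saturation.
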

It is also worth noting that the analogous statements hold for the extremal number and rainbow extremal number, so $\rex(n,H) = \ex(n,\ca{F}^*(H))= \ex(n,\ca{M}^*(H))$.

One potentially fruitful approach to determining $\prsat(n,H)$ is first characterizing $\ca{M}^\ast(H)$, and then finding bounds for $\sat(n,\ca{M}^\ast)$.
Related to this, we pose the following question which is of independent interest.
\begin{question} \label{qst:minimal_finite}
For which graphs $H$ is $\ca{M}^\ast(H)$ finite?
\end{question}

The answer to Question \ref{qst:minimal_finite} seems to depend on the structure of the graph $H$.
Triangles and stars are rainbow in any proper colouring, so we trivially have $\ca{M}^\ast(K_3)=\{K_3\}$ and $\ca{M}^\ast(K_{1,k})=\{K_{1,k}\}$ for all $k \in \bb{N}$.
On the other hand, $\ca{M}^\ast(P_4)$ is infinite, as shown in the following result.
Let $T_5^*$ be the $5$-vertex graph obtained by subdividing one edge of the star $K_{1,3}$.

\begin{restatable}{theorem}{pathfourminimal}
$\ca{M}^*(P_4) = \{T_5^*\}\cup\{C_{2k+3}:k \in \bb{N}\}$.
\end{restatable}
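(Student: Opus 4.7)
The plan is to establish two inclusions: every graph in $\{T_5^*\}\cup\{C_{2k+3}:k\in\bb{N}\}$ lies in $\ca{M}^*(P_4)$, and every minimal element of $\ca{F}^*(P_4)$ appears in this list.

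For the first direction I would verify both $\ca{F}^*(P_4)$-membership and minimality. Writing $T_5^*$ with its degree-$3$ vertex $v$ having neighbours $a_1,a_2,a_3$ and a leaf $w$ adjacent to $a_1$, the only copies of $P_4$ in $T_5^*$ are $w,a_1,v,a_j$ for $j\in\{2,3\}$; such a copy is non-rainbow precisely when $\phi(wa_1)=\phi(va_j)$ (the only repetition compatible with properness), and since $\phi(va_2)\ne\phi(va_3)$ this equality holds for at most one $j$. For $C_{2k+3}$ with cyclically ordered edges $e_1,\ldots,e_{2k+3}$, a proper rainbow-$P_4$-free colouring would force $\phi(e_i)=\phi(e_{i+2})$ for every $i$, producing a $2$-periodic pattern impossible on an odd-length cycle. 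Minimality is routine: every proper subgraph of $T_5^*$ or of $C_{2k+3}$ either contains no $P_4$ or admits a proper $2$-edge-colouring (alternating along the resulting path), and in either case avoids a rainbow $P_4$.

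For the second direction I would prove the structural claim that every $G\in\ca{F}^*(P_4)$ contains $T_5^*$ or some $C_{2k+3}$ with $k\ge 1$ as a subgraph; this together with the first inclusion implies that any minimal element of $\ca{F}^*(P_4)$ belongs to the claimed list. I first reduce to connected $G$: if every component of $G$ admits a rainbow-$P_4$-free proper colouring, then combining them with pairwise disjoint colour palettes yields one for $G$, so $G\in\ca{F}^*(P_4)$ forces some component of $G$ to lie in $\ca{F}^*(P_4)$. Then $\chi'(G)\ge 3$, since a proper $2$-colouring makes every $P_4$ non-rainbow. In the case $\Delta(G)=2$, $G$ must be a cycle (paths have $\chi'\le 2$) and in fact an odd cycle of length at least $5$ (even cycles have $\chi'=2$ and $C_3$ has no $P_4$). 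In the case $\Delta(G)\ge 3$, I pick $v$ of degree $k\ge 3$ with neighbours $a_1,\ldots,a_k$; if some $a_i$ has a neighbour $b\notin\{v,a_1,\ldots,a_k\}$, then $va_i,va_j,va_l,a_ib$ (for any further $a_j,a_l$) already forms a $T_5^*$.

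The main obstacle is the remaining subcase, where $V(G)=\{v,a_1,\ldots,a_k\}$ and $v$ is universal. When $k\ge 4$ this is easy: if some non-star edge $a_ia_j$ exists, then picking two more neighbours $a_l,a_m$ of $v$ yields a $T_5^*$ on $va_i,va_l,va_m,a_ia_j$; otherwise $G=K_{1,k}$ has no $P_4$, contradicting $G\in\ca{F}^*(P_4)$. When $k=3$, up to isomorphism $G$ is one of $K_{1,3}$, $K_{1,3}$ plus one edge, $K_{1,3}$ plus two edges sharing a common endpoint, or $K_4$; the first has no $P_4$, and for the other three I would exhibit an explicit proper colouring in which every $P_4$ has equal end-edge colours. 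Taking $\phi(va_i)=i$, the choice $\phi(a_1a_2)=3$ handles $K_{1,3}$ plus one edge, further setting $\phi(a_1a_3)=2$ handles $K_{1,3}$ plus two edges sharing $a_1$, and for $K_4$ the standard proper $3$-edge-colouring whose colour classes are its three perfect matchings suffices, since every $P_4$ in $K_4$ spans all four vertices and hence has end edges forming a perfect matching. These colourings contradict $G\in\ca{F}^*(P_4)$ in each case, completing the case analysis.
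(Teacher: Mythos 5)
Your proof is correct and follows essentially the same two-part strategy as the paper: show each graph in the list is minimal, then show every $G\in\ca{F}^*(P_4)$ contains one. Where you improve on the paper's exposition is in the second part: the paper's line ``if $G$ has a vertex of degree at least $3$, then $G\supset T_5^*$'' is not literally true for disconnected $G$ (e.g.\ $(K_4\setminus e)\cup C_5$), and your explicit reduction to connected components, together with the careful treatment of the universal-vertex case, closes that small gap cleanly while landing in the same place.
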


\begin{proof}
In any proper colouring of $T_5^*$, letting $e$ be the edge not incident to the vertex of degree $3$ in $T_5^*$, at most one edge $e'\neq e$ receives the same colour as $e$, so removing this edge if necessary, we obtain a rainbow graph that contains $P_4$.
Thus, $T_5^* \in \ca{F}^*(P_4)$.
Ignoring isolated vertices, any proper subgraph of $T_5^\ast$ is either contained in $K_{1,3}$, which is $P_4$-free, or $P_4$, which has a rainbow $P_4$-free proper colouring, so $T_5^\ast \in \ca{M}^\ast(P_4)$.

Given $k\in \bb{N}$, any rainbow $P_4$-free proper colouring of $C_{2k+3}$ must be alternating, but odd cycles have no alternating proper colouring.
Any proper subgraph of $C_{2k+3}$ is contained in the path $P_{2k+3}$, which has an alternating (and thus rainbow $P_4$-free) proper colouring, so $C_{2k+3} \in \ca{M}^\ast(P_4)$.

Take any $G \in \ca{F}^\ast(P_4)$.
$G$ contains $P_4$, so $G$ is not a star.
Since a proper $3$-colouring of $K_4$ avoids a rainbow copy of $P_4$, we have $|V(G)|\ge 5$.
Thus, if $G$ has a vertex of degree at least $3$, then $G\supset T_5^\ast$.
Otherwise, $G$ is a disjoint union of cycles and paths, and $G$ has at least one component that is an odd cycle of order $\ge 5$, since $C_3 \not\supset P_4$ and path components can be coloured with alternating colourings to avoid a rainbow copy of $P_4$.
Thus, $G$ contains a graph in $ \{T_5^\ast\}\cup\{C_{2k+3}:k \in \bb{N}\}$.
This completes the proof.
\end{proof}

The following result illustrates that $\ca{M}^\ast(H)$ can also be finite in a non-trivial case. 
\begin{restatable}{theorem}{twomatchingminimal}
$\ca{M}^\ast(2K_2) = \{P_3\cup K_2\}$.
\end{restatable}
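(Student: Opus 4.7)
The plan is to establish both inclusions $\{P_3\cup K_2\}\subseteq \mathcal{M}^\ast(2K_2)$ and $\mathcal{M}^\ast(2K_2)\subseteq \{P_3\cup K_2\}$.

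First I would check that $P_3\cup K_2\in \mathcal{F}^\ast(2K_2)$: in any proper colouring the two adjacent edges of the $P_3$ receive distinct colours, so at least one of them differs from the colour of the $K_2$, and together they form a rainbow $2K_2$. Minimality is then immediate, since every proper subgraph of $P_3\cup K_2$ (up to isolated vertices) is either $2K_2$, which admits a monochromatic proper colouring, or contains no $2K_2$ at all.

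Since $\mathcal{F}^\ast(2K_2)$ is closed under supergraphs, for the reverse inclusion it suffices to show that every $G\in \mathcal{F}^\ast(2K_2)$ contains $P_3\cup K_2$ as a subgraph; I would prove the contrapositive, that any $G$ with $P_3\cup K_2\not\subseteq G$ admits a proper colouring with no rainbow $2K_2$. The key step is a structural classification of such $G$. If $G$ has two non-trivial components and one of them contains a $P_3$, combining these immediately yields $P_3\cup K_2$; so (up to isolated vertices) either $G$ is a matching $tK_2$, or $G$ has a unique non-trivial component $H$ with $|E(H)|\ge 2$. When $|V(H)|\ge 5$, the absence of $P_3\cup K_2$ rules out both a $P_4$ in $H$ (a $P_4$ combined with an edge incident to a fifth vertex, which exists by connectivity, would complete a $P_3\cup K_2$) and a triangle in $H$ (a triangle together with a fourth vertex extends to a $P_4$, reducing to the previous case). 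A brief extra argument then shows that such an $H$ is a star: any maximum-degree vertex $v$ must be universal (otherwise a vertex outside $N(v)\cup\{v\}$ would create a $P_4$), and triangle-freeness forces $N(v)$ to be independent, so $H=K_{1,|V(H)|-1}$.

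Finally, I would exhibit a suitable colouring in each remaining case: a matching is coloured monochromatically; a star contains no $2K_2$, so any proper colouring works; and if $|V(H)|\le 4$, I would embed $H$ into $K_4$ and restrict the standard proper $3$-edge-colouring of $K_4$ (whose three colour classes are the three perfect matchings), so that every pair of disjoint edges of $H$ covers all four vertices of $K_4$ and hence lies in a single colour class. The main obstacle is the structural claim that any connected graph on at least five vertices avoiding $P_3\cup K_2$ must be a star; once that is settled, the case analysis and colourings are routine.
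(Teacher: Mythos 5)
Your proposal is correct and takes essentially the same approach as the paper, just phrased in contrapositive form: you classify the $P_3\cup K_2$-free graphs (matchings, stars, or graphs on at most four vertices) and exhibit a rainbow-$2K_2$-free proper colouring in each case, whereas the paper directly shows any $G\in\ca{F}^\ast(2K_2)$ contains $P_3\cup K_2$ by ruling out exactly these cases via the same three colourings (the monochromatic colouring of a matching, any proper colouring of a star since it is $2K_2$-free, and the proper $3$-edge-colouring of $K_4$). The only presentational difference is in the structural lemma: the paper invokes that a connected non-star on at least five vertices contains $P_5$ or $T_5^\ast$, while you prove the equivalent dual statement that a connected $P_3\cup K_2$-free graph on at least five vertices is a star.
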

\begin{proof}
$P_3\cup K_2$ contains a rainbow copy of $2K_2$ in any proper colouring.
Ignoring isolated vertices, any proper subgraph of $P_3\cup K_2$ is contained in $2K_2$ and thus has a rainbow $2K_2$-free proper colouring.

Let $G \in \ca{F}^\ast(2K_2)$; we can assume without loss of generality that $G$ has no isolated vertices.
A proper $3$-colouring of $K_4$ avoids a rainbow copy of $2K_2$, so $|V(G)|\ge 5$.
$G$ is not a star, else $G$ is $2K_2$-free.
$G$ is not a matching, else it is properly $1$-colourable.
Thus, if $G$ is disconnected, then it has a vertex $v$ of degree $2$ and a component disjoint from the one containing $v$, so it contains $P_3 \cup K_2$.
If $G$ is connected, then since it is not a star, it contains $P_5$ or $T_5^\ast$, both of which contain $2K_2$.
Thus, $G\supset P_3 \cup K_2$.
This completes the proof.
\end{proof}

\subsection{Determining when $\prsat(n,H) = O(1)$}

In Section~\ref{sec:isolated}, we determined several families of graphs $H$ such that $\prsat(n,H \cup K_2) = O(1)$.

A natural question to ask is the following.

\begin{question} \label{qst:constant_satast}
Is it the case that if $H$ has an isolated edge, then $\prsat(n,H) = O(1)$?
\end{question}

Perhaps a reasonable starting point is to consider trees. Proper rainbow saturation is particularly well-understood for trees: in~\cite{trees}, we provide an asymptotic characterization of the proper rainbow saturation number for a large class of trees.

\begin{question}
    Let $T$ be a tree. Is $\prsat(n,T \cup K_2) = O(1)$?
\end{question}

Given Theorem~\ref{thm:mK2}, it is natural to wonder whether adding an isolated edge to a graph with constant proper rainbow saturation number yields another such graph.

\begin{question}
    Let $H$ be a graph such that $\prsat(n,H) = O(1)$. Is $\prsat(n, H \cup K_2) = O(1)$?
\end{question}

It would also be interesting to find other cyclic graphs for which this is true, as the triangle is rainbow in any proper colouring so does not provide a particularly interesting example. 

\textbf{Note added before submission:} Whilst finalising this manuscript, it came to our attention that Theorem~\ref{thm:K4construction} and Theorem~\ref{thm:Ck_upper_satast}(i) had simultaneously and independently been proved by Baker, Gomez-Leos, Halfpap, Heath, Martin, Miller, Parker, Pungello, Schwieder, and Veldt~\cite{emily}. They additionally proved that the bound in Theorem~\ref{thm:K4construction} is asymptotically tight, and they provide a general lower bound of $\prsat(n,K_k) \ge (k-1)n+O(1)$ for $k \ge 5$ and an upper bound of $\prsat(n,K_k) \le \frac{k^4}{8}+O(1)$ for $k\ge 3$, among other interesting results.

\bibliographystyle{amsplain}
\bibliography{RainbowSat}

\end{document}